\definecolor{ddarkbrown}{rgb}{0.5,0.2,0.05} \definecolor{bbluegray}{rgb}{0.05,0,0.5}
\newtheorem{theorem}{Theorem}[section]
\newtheorem{proposition}[theorem]{Proposition}
\newtheorem{lemma}[theorem]{Lemma}
\newtheorem{corollary}[theorem]{Corollary}
\renewenvironment{proof}{\textbf{Proof.}}{\QED\bigskip}
\newcommand{\BEAS}{\begin{eqnarray*}}
\newcommand{\EEAS}{\end{eqnarray*}}
\newcommand{\BEA}{\begin{eqnarray}}
\newcommand{\EEA}{\end{eqnarray}}
\newcommand{\BEQ}{\begin{equation}}
\newcommand{\EEQ}{\end{equation}}
\newcommand{\BIT}{\begin{itemize}}
\newcommand{\EIT}{\end{itemize}}
\newcommand{\BNUM}{\begin{enumerate}}
\newcommand{\ENUM}{\end{enumerate}}
\newcommand{\BA}{\begin{array}}
\newcommand{\EA}{\end{array}}
\newcommand{\ones}{\mathbf 1}
\newcommand{\reals}{{\mbox{\bf R}}}
\newcommand{\symm}{{\mbox{\bf S}}}  
\newcommand{\diam}{\mathop{\bf diam}}
\newcommand{\sphere}{{\mathbb S}}
\newcommand{\Rank}{\mathop{\bf Rank}}
\newcommand{\Card}{\mathop{\bf Card}}
\newcommand{\Tr}{\mathop{\bf Tr}}
\newcommand{\diag}{\mathop{\bf diag}}
\newcommand{\idm}{\mathbf{I}}
\newcommand{\Expect}{\textstyle\mathop{\bf E}}
\newcommand{\Prob}{\mathop{\bf Prob}}
\newcommand{\QED}{~~\rule[-1pt]{6pt}{6pt}}
\begin{document}
\title{Sparse Recovery, Kashin Decomposition and Conic Programming}
\author{Alexandre d'Aspremont}
\address{ORFE, Princeton University, Princeton, NJ 08544.}
\email{aspremon@princeton.edu}

\keywords{Compressed Sensing, Kashin Decomposition, Semidefinite Programming.}
\date{Jan. 12 2011}
\subjclass[2010]{94A12, 90C27, 90C22}

\begin{abstract}
We produce relaxation bounds on the diameter of arbitrary sections of the $\ell_1$ ball in $\reals^n$. We use these results to test conditions for sparse recovery.
\end{abstract}
\maketitle

\section{Introduction}
Let $A\in\reals^{m \times n}$ be a full rank matrix, we are given $m$ observations $Au$ of a signal $u\in\reals^n$, and we seek to decode it by solving
\BEQ\label{eq:l0-dec}
\BA{ll}
\mbox{minimize} & \Card(x)\\
\mbox{subject to} & Ax=Au,
\EA
\EEQ
in the variable $x\in\reals^n$. Problem~\eqref{eq:l0-dec} is combinatorially hard, but under certain conditions on the matrix $A$ (see e.g. \citet{Dono05,Cand05,Kash07,Cohe06}), we can reconstruct the signal by solving instead
\BEQ\label{eq:l1-dec}
\BA{ll}
\mbox{minimize} & \|x\|_1\\
\mbox{subject to} & Ax=Au,
\EA
\EEQ
which is a convex problem in the variable $x\in\reals^n$.

\section{Sparse recovery conditions}
We begin by discussing conditions on the coding matrix $A\in\reals^{m \times n}$ and on the signal $u$ which guarantee that the solution to the $\ell_1$ minimization problem~\eqref{eq:l1-dec} matches that of the $\ell_0$ minimization problem~\eqref{eq:l0-dec} and allows us to reconstruct the original signal $u$.

\subsection{Discrete signals}
We first assume that the signal $u$ only takes discrete values. For a given coding matrix $A\in\reals^{m \times n}$, the proposition below describes a sufficient condition which guarantees that a discrete signal $u\in\{-1,0,1\}^n$ will be reconstructed by solving problem~\eqref{eq:l1-dec}.

\begin{proposition} \label{prop:l1-rec}
We define
\BEQ\label{eq:u-set}
{\mathcal U}=\left\{u\in\reals^n:~u^Tx + \xi \sum_{i=1}^n |u_i x_i| \leq \xi \|x\|_1, \forall x \in \reals^n:Ax=0 \right\}.
\EEQ
If $u\in\{-1,0,1\}^n \cap {\mathcal U}$ for some $\xi\in(0,1)$ and $z\in\reals^n$ solves the $\ell_1$ recovery problem in~\eqref{eq:l1-dec}, then the signature of $z$ is a subset of that of $u$, i.e. $u_iz_i=|z_i|$, $i=1,\ldots,n$.
\end{proposition}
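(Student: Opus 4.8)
The plan is to turn the $\ell_1$-optimality of $z$ into a constraint on the error vector $x := z-u$, which lies in $\ker A$ since $Az=Au$, and then to play that constraint against the membership condition \eqref{eq:u-set} for $\mathcal U$, evaluated at both $x$ and $-x$ (legitimate because $\ker A$ is a linear subspace). Throughout I write $S=\{i:u_i\neq 0\}$ for the support of $u$, so that $|u_i|=1$ on $S$ and $\|u\|_1=|S|$.

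First, since $u$ is itself feasible for \eqref{eq:l1-dec} and $z$ is optimal, $\|z\|_1\le\|u\|_1=|S|$. Next I lower-bound $\|z\|_1$ coordinatewise: on $S^c$ one has $|u_i+x_i|=|x_i|$, while on $S$ the elementary bound $|u_i+x_i|\ge u_i(u_i+x_i)=1+u_ix_i$ holds because $|u_i|=1$; summing the two pieces,
\[
\|z\|_1 \;\ge\; |S| + u^Tx + \|x_{S^c}\|_1 ,
\]
with equality precisely when $|z_i|=u_iz_i$ for every $i\in S$ (the gap is $\sum_{i\in S}(|z_i|-u_iz_i)$, a sum of nonnegative terms). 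Combining with the optimality bound yields $u^Tx\le-\|x_{S^c}\|_1$, in particular $u^Tx\le 0$.

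The third ingredient is the observation that $\sum_i|u_ix_i|=\|x_S\|_1$, since $|u_i|=1$ on $S$ and $u_i=0$ off $S$; hence the $\xi\|x_S\|_1$ terms cancel in \eqref{eq:u-set}, and using $-x$ in place of $x$ as well, membership of $u$ in $\mathcal U$ is equivalent to $|u^Tx|\le\xi\|x_{S^c}\|_1$ for all $x\in\ker A$. Feeding in $u^Tx\le-\|x_{S^c}\|_1\le0$ gives $\|x_{S^c}\|_1\le -u^Tx=|u^Tx|\le\xi\|x_{S^c}\|_1$, i.e.\ $(1-\xi)\|x_{S^c}\|_1\le0$; since $\xi\in(0,1)$ this forces $x_{S^c}=0$, so the support of $z$ is contained in $S$. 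Then $|u^Tx|\le\xi\|x_{S^c}\|_1=0$, so $u^Tx=0$, and the lower bound of the previous step collapses to $\|z\|_1\ge|S|$, hence to equality with the optimality bound; equality there is exactly $u_iz_i=|z_i|$ for $i\in S$, and for $i\notin S$ the claim is trivial since $z_i=0$.

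This is the familiar null-space-property squeeze, so I do not anticipate a genuine obstacle; the one step that needs care is the bookkeeping above — recognizing that the penalty $\xi\sum_i|u_ix_i|$ in the definition of $\mathcal U$ is exactly the restriction of $\xi\|x\|_1$ to the support of $u$, which is what lets membership in $\mathcal U$ reduce to the one-sided control $|u^Tx|\le\xi\|x_{S^c}\|_1$. The hypothesis $\xi<1$ is invoked only once, at the very end, to convert the two-sided estimate on $\|x_{S^c}\|_1$ into $x_{S^c}=0$.
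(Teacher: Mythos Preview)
Your proof is correct and follows essentially the same route as the paper's: apply the defining inequality of $\mathcal U$ to the nullspace vector $u-z$ (your $-x$), observe that the term $\xi\sum_i|u_ix_i|$ collapses to $\xi\|x_S\|_1$ and cancels, combine with $\|z\|_1\le\|u\|_1$ to force $z_{S^c}=0$, and read off $u_iz_i=|z_i|$ from the resulting equalities. The only cosmetic difference is that you invoke the $\mathcal U$ condition at both $x$ and $-x$ to get a two-sided bound $|u^Tx|\le\xi\|x_{S^c}\|_1$, whereas the paper uses only the $-x$ direction; the extra direction is harmless but not needed.
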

\begin{proof}
Suppose there is a vector $z\in\reals^n$, with $Az=Au$ and $\|z\|_1\leq\|u\|_1$. Let $I=\{i\in[1,n]:u_i\neq 0\}$ be the support of the signal $u$ and $J$ its complement in $[1,n]$, the vector $u-z$ is in the nullspace of~$A$ so $u\in{\mathcal U}$ implies
\[
u^T(u-z) + \xi \sum_{i=1}^n |u_i| |u_i-z_i| \leq \xi \|u-z\|_1.
\]
Because $u_i\in\{-1,0,1\}$ this is equivalent to
\[
u^T(u-z) \leq \xi \|z_J\|_1,
\]
hence, having assumed $\|z\|_1\leq\|u\|_1$, we get
\BEAS
\|z\|_1 ~ = ~ \|z_I\|_1 + \|z_J\|_1 ~ \leq  ~ \|u\|_1 &\leq & u^Tz + \xi \|z_J\|_1\\
&\leq & \|z_I\|_1 + \xi \|z_J\|_1,
\EEAS
so $\|z_J\|_1=0$. Then $\|z\|_1 \leq \|u\|_1 \leq u^Tz \leq \|z\|_1$ means $u_i z_i=|z_i|$, $i=1,\ldots,n$.
\end{proof}

Given a priori bounds on the signal coefficients, we obtain the following (tighter) result, which ensures that the signature of the decoded signal matches that of the true one, when solving a modified version of problem~\eqref{eq:l1-dec}.
\begin{corollary}\label{prop:l1-rec-cube}
Let z solve
\BEQ\label{eq:l1-dec-cube}
\BA{ll}
\mbox{minimize} & \|x\|_1\\
\mbox{subject to} & Ax=Au\\
& \|x\|_\infty \leq 1,
\EA\EEQ
If $u\in\{-1,0,1\}^n \cap {\mathcal U}$ for some $\xi\in(0,1)$, where ${\mathcal U}$ was defined in~\eqref{eq:u-set}, then $z=u$.
\end{corollary}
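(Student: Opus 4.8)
The plan is to reduce the statement to Proposition~\ref{prop:l1-rec} and then use the additional constraint $\|x\|_\infty\leq1$ to sharpen its conclusion from signature containment to equality. First I would note that $u$ is itself feasible for~\eqref{eq:l1-dec-cube}: it satisfies $Au=Au$ trivially, and since $u\in\{-1,0,1\}^n$ we have $\|u\|_\infty\leq1$. Hence the optimal value of~\eqref{eq:l1-dec-cube} is at most $\|u\|_1$, so the solution $z$ obeys $Az=Au$, $\|z\|_1\leq\|u\|_1$ and $\|z\|_\infty\leq1$.

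Next I would observe that the proof of Proposition~\ref{prop:l1-rec} uses only $Az=Au$, $\|z\|_1\leq\|u\|_1$ and $u\in\{-1,0,1\}^n\cap\mathcal U$, and never the fact that $z$ optimizes the \emph{unconstrained} problem; it therefore applies verbatim to the solution of~\eqref{eq:l1-dec-cube}. Running that argument yields $\|z_J\|_1=0$, so $z$ is supported on the support $I$ of $u$, together with $u_iz_i=|z_i|$ for all $i$; moreover the chain of inequalities $\|z\|_1\leq\|u\|_1\leq u^Tz\leq\|z\|_1$ appearing there collapses to equalities, giving in particular $\|z\|_1=\|u\|_1$.

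Finally I would combine these facts with the box constraint. Since $z_J=0$ and $u_i=\pm1$ on $I$ we have $\|u\|_1=|I|$ and $\|z\|_1=\sum_{i\in I}|z_i|$, so $\|z\|_1=\|u\|_1$ forces $\sum_{i\in I}|z_i|=|I|$; as each $|z_i|\leq1$, this is possible only if $|z_i|=1$ for every $i\in I$. With $u_iz_i=|z_i|=1$ and $u_i=\pm1$ this gives $z_i=u_i$ on $I$, and $z_i=0=u_i$ on $J$, hence $z=u$ (which also shows the minimizer is unique). I do not expect a genuine obstacle here — the result is really a corollary — the only points needing care are confirming that the argument of Proposition~\ref{prop:l1-rec} survives the addition of the constraint, and recognising that $\|x\|_\infty\leq1$ is exactly what turns the scale-invariant bound $|z_i|\leq\|z\|_\infty$ into $|z_i|\leq1$, pinning $z$ to the integral vector $u$.
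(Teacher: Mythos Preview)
Your argument is correct and follows the same route as the paper: invoke the conclusions of Proposition~\ref{prop:l1-rec} (which only needed $Az=Au$ and $\|z\|_1\leq\|u\|_1$) to obtain $u^Tz=\|z\|_1=\|u\|_1$, then use $\|z\|_\infty\leq1$ to upgrade this to $z=u$. You have simply spelled out the last implication in more detail than the paper does.
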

\begin{proof}
In the proof of Proposition~\ref{prop:l1-rec}, we showed $u^Tz=\|z\|_1=\|u\|_1$ under the same assumptions, which together with the additional constraint that $\|z\|_\infty \leq 1$ means that $z=u$.
\end{proof}

Next, we show that controlling the ratio of dual pairs of norms on the nullspace of $A$ provides simple sufficient conditions for checking that a signal $u$ belongs to the set ${\mathcal U}$ of $\ell_1$-recoverable signals.

\begin{proposition} \label{prop:univ-cond}
Let $\|\cdot\|$ be a norm on $\reals^n$ and $\|\cdot\|_*$ its dual, $A\in\reals^{m \times n}$ and $u\in\{-1,0,1\}^n$, if
\BEQ\label{eq:univ-cond}
\sup_{\substack{Ax=0,\\ \|x\|_1\leq 1}} \|x\| < \frac{1}{\|u\|_*}
\EEQ
then $u\in{\mathcal U}$, where ${\mathcal U}$ is the set of $\ell_1$-recoverable signals defined in~\eqref{eq:u-set}.
\end{proposition}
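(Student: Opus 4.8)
The plan is to unwind the definition of ${\mathcal U}$ using that $u$ is $\pm1$‑valued on its support, and then to estimate the resulting linear functional on $\ker A$ via the dual‑norm inequality fed by~\eqref{eq:univ-cond}.

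First I would rewrite membership in ${\mathcal U}$. Let $I=\{i:u_i\neq 0\}$ and let $J$ be its complement. Since $u_i\in\{-1,0,1\}$ we have $\sum_i|u_ix_i|=\|x_I\|_1$ and $\|x\|_1=\|x_I\|_1+\|x_J\|_1$, so after cancelling $\xi\|x_I\|_1$ the defining inequality of ${\mathcal U}$ (for a fixed $\xi$) becomes
\[
u^Tx \;\leq\; \xi\,\|x_J\|_1 \qquad\text{for all }x\in\reals^n\text{ with }Ax=0 .
\]
Thus it suffices to produce a single $\xi\in(0,1)$ for which this holds on all of $\ker A$.

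Next I would put~\eqref{eq:univ-cond} in a usable form. The section $\{x:Ax=0,\ \|x\|_1\leq 1\}$ is compact, so the supremum is attained, say equal to $\beta<1/\|u\|_*$, and by homogeneity $\|x\|\leq\beta\|x\|_1$ for every $x\in\ker A$. The defining property of the dual norm then gives
\[
u^Tx \;\leq\; \|u\|_*\,\|x\| \;\leq\; \theta\,\|x\|_1, \qquad \theta:=\|u\|_*\beta<1,
\]
for all $x\in\ker A$. This already bounds $u^Tx$ by $\|x\|_1$; the remaining task, and the one I expect to be the real obstacle, is to replace $\|x\|_1=\|x_I\|_1+\|x_J\|_1$ on the right by $\|x_J\|_1$ alone with a constant still below $1$.

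To close that gap I would use the $\{-1,0,1\}$ hypothesis once more, together with a null‑space estimate. On one hand $u^Tx=\sum_{i\in I}u_ix_i\leq\|x_I\|_1$ for free; on the other, applying the same dual‑norm bound to the sign pattern of $x$ restricted to $I$ — whose support lies inside $I$, so that (for an absolute norm $\|\cdot\|$) its dual norm is at most $\|u\|_*$ — yields $\|x_I\|_1\leq\theta\,\|x\|_1$, hence a bound of the form $\|x_I\|_1\leq\tfrac{\theta}{1-\theta}\,\|x_J\|_1$ on $\ker A$. Substituting this into $u^Tx\leq\theta(\|x_I\|_1+\|x_J\|_1)$ gives $u^Tx\leq\xi\,\|x_J\|_1$ for an explicit $\xi$, which places $u$ in ${\mathcal U}$ and lets Proposition~\ref{prop:l1-rec} apply. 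The subtlety is entirely in making that final constant come out below $1$: the bare dual‑norm/compactness argument only controls $\|x\|_1$, and separating out the $\|x_I\|_1$ component — which is exactly where the sparsity pattern of $u$ (and, implicitly, the comparison of dual norms under shrinking supports) enters — is the step that has to be handled carefully.
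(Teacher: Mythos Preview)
Your reformulation of the $\mathcal U$ condition as $u^Tx\le\xi\|x_J\|_1$ is correct, but the subsequent two--step detour (first bounding $u^Tx$ by $\theta\|x\|_1$, then separately bounding $\|x_I\|_1$ via the sign vector of $x_I$) is not needed. The paper never splits $\|x\|_1$ into $I$ and $J$ parts at all. It observes directly that the full left--hand side of the defining inequality satisfies
\[
u^Tx+\xi\sum_i|u_ix_i|\;\le\;(1+\xi)\,|u|^T|x|\;\le\;(1+\xi)\,\|u\|_*\,\|x\|\;\le\;(1+\xi)\,\theta\,\|x\|_1,
\]
so one only has to pick $\xi$ with $(1+\xi)\theta\le\xi$, i.e.\ $\xi\ge\theta/(1-\theta)$. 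A single use of the dual--norm inequality suffices; your second application (to the $\{-1,0,1\}$ sign pattern supported on $I$) is valid but ends up reproducing exactly the same constant $\xi=\theta/(1-\theta)$ by a longer path. Both arguments implicitly need the norm to be absolute---the paper for the step $|u|^T|x|\le\|u\|_*\|x\|$, you for the monotonicity $\|s\|_*\le\|u\|_*$---so your parenthetical caveat is apt and applies equally to the original.

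As for the ``subtlety'' you flag at the end (forcing $\xi<1$): your route and the paper's both reduce to $\xi=\theta/(1-\theta)$, which is below $1$ iff $\theta<1/2$. The paper simply asserts that such a $\xi\in(0,1)$ exists; it introduces no extra idea to push this all the way to $\theta<1$. So the obstacle you anticipated is not circumvented by anything in the paper's argument---you have in fact been more careful about it than the paper is.
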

\begin{proof}
When~\eqref{eq:univ-cond} holds
\[
\sup_{\substack{Ax=0\\ \|x\|_1\leq 1}}~|u|^T|x|\leq \|u\|_*\sup_{\substack{Ax=0,\\ \|x\|_1\leq 1}} \|x\| \leq \frac {\xi}{1+\xi}
\]
for some $\xi\in(0,1)$, where $|u|$ is the vector with components $|u_i|$. We then have
\[
\sup_{\substack{Ax=0, \|x\|_1\leq 1\\ w\in\{-1,1\}^n}}~u^T(\idm+\xi \diag(w))x
\leq
(1+\xi)\sup_{\substack{Ax=0\\ \|x\|_1\leq 1}}~|u|^T|x| \leq \xi
\]
which means $u\in{\mathcal U}$.
\end{proof}

We can bound the value of the $\sup$ in~\eqref{eq:univ-cond} when $\|\cdot\|$ is the Euclidean norm and the matrix $A$ satisfies the restricted isometry property of order $k^*$ with constant $\delta<1$.

\begin{lemma}\label{lem:nrm-ratio-RIP}
Suppose $A\in\reals^{m \times n}$ satisfies the Restricted Isometry Property (RIP) of order $3k^*$ with constant~$\delta_{3k^*}\leq \delta < 1$, then
\BEQ\label{eq:ratio-bnd}
\sup_{Ax=0} \frac{\|x\|_2}{\|x\|_1}~ \leq ~ \frac{2}{(1-\delta)\sqrt{k^*}}.
\EEQ
\end{lemma}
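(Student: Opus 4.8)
The plan is a standard ``shelling'' argument combining the Restricted Isometry Property with the fact that the sorted coordinates of a vector decay on average. Fix a nonzero $x$ with $Ax=0$; since the ratio $\|x\|_2/\|x\|_1$ is scale invariant it suffices to bound it for this fixed $x$. Relabel the coordinates so that $|x_1|\geq|x_2|\geq\cdots\geq|x_n|$, and partition $\{1,\ldots,n\}$ into consecutive blocks $T_0=\{1,\ldots,k^*\}$, $T_1=\{k^*+1,\ldots,2k^*\}$, and so on, the last block possibly having fewer than $k^*$ elements; write $x_T$ for the restriction of $x$ to an index set $T$.

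The first key step is the \emph{decay estimate}. Because the blocks are ordered by magnitude, every entry indexed by $T_j$ (for $j\geq 1$) is, in absolute value, no larger than the average magnitude over $T_{j-1}$, so $\|x_{T_j}\|_2\leq\sqrt{k^*}\,\max_{i\in T_j}|x_i|\leq \|x_{T_{j-1}}\|_1/\sqrt{k^*}$. Summing over $j\geq 1$ telescopes to $\sum_{j\geq 1}\|x_{T_j}\|_2\leq \|x\|_1/\sqrt{k^*}$, and since the $T_j$ are disjoint this also gives $\|x_{T_0^c}\|_2\leq\sum_{j\geq1}\|x_{T_j}\|_2\leq \|x\|_1/\sqrt{k^*}$.

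The second step brings in the RIP. The head $x_{T_0}$ is $k^*$-sparse, and $Ax=0$ gives $A x_{T_0}=-\sum_{j\geq1}Ax_{T_j}$. Each $x_{T_j}$ is $k^*$-sparse (hence $3k^*$-sparse), so applying the lower RIP bound to $x_{T_0}$ and the upper RIP bound termwise on the right-hand side, with constant $\leq\delta$, yields $\sqrt{1-\delta}\,\|x_{T_0}\|_2\leq\|Ax_{T_0}\|_2\leq\sqrt{1+\delta}\sum_{j\geq1}\|x_{T_j}\|_2\leq\sqrt{1+\delta}\,\|x\|_1/\sqrt{k^*}$, that is, $\|x_{T_0}\|_2\leq\sqrt{(1+\delta)/(1-\delta)}\;\|x\|_1/\sqrt{k^*}$. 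Combining with the decay estimate through $\|x\|_2\leq\|x_{T_0}\|_2+\|x_{T_0^c}\|_2$ gives $\|x\|_2\leq\bigl(\sqrt{(1+\delta)/(1-\delta)}+1\bigr)\|x\|_1/\sqrt{k^*}$, and since $1-\delta^2\leq1$ implies $\sqrt{(1+\delta)/(1-\delta)}\leq 1/(1-\delta)$, the prefactor is at most $\tfrac{1}{1-\delta}+1=\tfrac{2-\delta}{1-\delta}\leq\tfrac{2}{1-\delta}$, which is the claimed bound.

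There is no serious obstacle here once the blocking is set up; the point deserving care is the bookkeeping, namely checking that RIP is invoked only on index sets of size $\leq 3k^*$ and that the telescoping of $\sum_j\|x_{T_j}\|_1$ over \emph{all} blocks, including $T_0$, is what recovers a clean $\|x\|_1$. The stated order $3k^*$ is more generous than this particular proof needs; the slack, and the factor $2$ rather than $1$, leaves room for the sharper variant in which one takes a size-$2k^*$ head $x_{T_0\cup T_1}$ and uses the near-orthogonality of $A$ on disjoint supports, $|\langle Ax_{T_0\cup T_1},Ax_{T_j}\rangle|\leq\delta\,\|x_{T_0\cup T_1}\|_2\,\|x_{T_j}\|_2$, which is where three-fold sparsity would genuinely be used.
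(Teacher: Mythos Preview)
Your proof is correct and follows essentially the same shelling argument as the paper, which quotes the Cohen--Dahmen--DeVore head bound $\|\eta_{T_0}\|_2\leq\frac{1+\delta}{1-\delta}\,\|\eta_{T_0^c}\|_1/\sqrt{k^*}$ and combines it with the identical tail decay estimate and triangle inequality. Your direct RIP step gives the slightly sharper factor $\sqrt{(1+\delta)/(1-\delta)}$ (and indeed only uses RIP of order $k^*$), which you then relax; the paper's quoted constant is exactly the one coming from the $2k^*$-head, near-orthogonality variant you sketch in your closing remark, and that is where order $3k^*$ is actually used.
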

\begin{proof}
We roughly follow the proof of \citet[Lemma 4.1]{Cohe06}. Let $\eta\in\reals^n$ be in the nullspace of~$A$. Let $T=T_0$ be the index set of the $k^*$ largest magnitude coefficients in $\eta$, with $T_1$ corresponding to the next $k^*$ largest and so on. \citet[Lemma 4.1]{Cohe06} show
\[
\|\eta_T\|_2 \leq \frac{(1+\delta)}{(1-\delta)}\frac{\|\eta_{T^c}\|_1}{\sqrt{k^*}}
\]
and
\[
\|\eta_{T_{i+1}}\|_2 \leq \frac{\|\eta_{T_i}\|_1}{\sqrt{k^*}}, \quad i\geq 0
\]
so 
\[
\|\eta_{T^c}\|_2 \leq \frac{\|\eta\|_1}{\sqrt{k^*}}
\]
and a triangular inequality yields the desired result.
\end{proof}

As discussed in \citep{Dono06a,Kash07} this result is in fact a direct consequence of classical bounds on Gel'fand and Kolmogorov widths, with \cite{Kash77,Garn84} showing in particular that
\[
\sup_{Ax=0} \frac{\|x\|_2}{\|x\|_1} \leq \frac{8}{\sqrt{n}}
\]
for some matrices $A$ (the proof is not constructive). Moreover, \cite{Kash77} shows that this holds with high probability when the nullspace of $A$ is picked at random uniformly on the Grassman manifold of subspaces of $\reals^n$ with dimension~$k\leq n/2$. In other words, when $n$ is large, most matrices are good sensing matrices.

\subsection{Generic signals}
Very similar results hold for arbitrary signals $u\in\reals^n$ at marginally lower thresholds. In particular, \citet[Th. 2.1]{Kash07} show the following guarantee.
\begin{proposition}\label{prop:card-S}
Given a coding matrix $A\in\reals^{m \times n}$, suppose that there is some $S>0$ such that
\BEQ\label{eq:diam}
\sup_{Ax=0} \frac{\|x\|_2}{\|x\|_1} \leq \frac{1}{\sqrt{S}}
\EEQ
then $x^\mathrm{LP}=u$ if $\Card(u)\leq S/4$, and
\[
\|u-x^\mathrm{LP}\|_1 \leq ~4\min_{\{\Card(y)\leq S/16\}} ~\|u-y\|_1
\]
where $x^\mathrm{LP}$ solves the $\ell_1$-recovery problem in~\eqref{eq:l1-dec} and $u$ is the original signal.
\end{proposition}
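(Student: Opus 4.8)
The plan is to work, as in all null-space-property arguments, with the error vector $\eta = x^{\mathrm{LP}}-u$, which satisfies $A\eta = 0$ because $Ax^{\mathrm{LP}}=Au$, and to squeeze its $\ell_1$-norm between two bounds: a cone inequality extracted from the $\ell_1$-optimality of $x^{\mathrm{LP}}$, and the diameter hypothesis~\eqref{eq:diam} applied to $\eta$.

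Fix an index set $T\subseteq[1,n]$ and split every vector into its restriction to $T$ and to $T^c$. From $\|x^{\mathrm{LP}}\|_1=\|u+\eta\|_1\le\|u\|_1$, applying the triangle inequality separately on $T$ (where $|u_i+\eta_i|\ge|u_i|-|\eta_i|$) and on $T^c$ (where $|u_i+\eta_i|\ge|\eta_i|-|u_i|$) gives
\[
\|u_T\|_1-\|\eta_T\|_1+\|\eta_{T^c}\|_1-\|u_{T^c}\|_1\le\|u_T\|_1+\|u_{T^c}\|_1,
\]
that is, $\|\eta_{T^c}\|_1\le\|\eta_T\|_1+2\|u_{T^c}\|_1$. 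On the other hand, by Cauchy--Schwarz and then~\eqref{eq:diam},
\[
\|\eta_T\|_1\le\sqrt{|T|}\,\|\eta_T\|_2\le\sqrt{|T|}\,\|\eta\|_2\le\sqrt{\tfrac{|T|}{S}}\,\|\eta\|_1 .
\]

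For the approximation bound I would choose $T$ to index the $\lfloor S/16\rfloor$ largest-magnitude entries of $u$, so that $|T|\le S/16$ and $\|u_{T^c}\|_1=\sigma:=\min_{\Card(y)\le S/16}\|u-y\|_1$. Then the displayed $\ell_1/\ell_2$ bound gives $\|\eta_T\|_1\le\tfrac14\|\eta\|_1$, and combining with the cone inequality,
\[
\|\eta\|_1=\|\eta_T\|_1+\|\eta_{T^c}\|_1\le 2\|\eta_T\|_1+2\sigma\le\tfrac12\|\eta\|_1+2\sigma,
\]
which rearranges to $\|\eta\|_1\le4\sigma$, the claimed estimate. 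For exact recovery I would instead take $T=\operatorname{supp}(u)$, so $|T|=\Card(u)\le S/4$ and $\|u_{T^c}\|_1=0$; now $\|\eta_T\|_1\le\tfrac12\|\eta\|_1$ and the cone inequality reads $\|\eta_{T^c}\|_1\le\|\eta_T\|_1$, whence $\|\eta\|_1\le2\|\eta_T\|_1\le\|\eta\|_1$. Every inequality in this last chain must therefore be an equality; but equality in Cauchy--Schwarz forces $|\eta_i|$ to be constant on $T$, equality in $\|\eta_T\|_2\le\|\eta\|_2$ forces $\eta_{T^c}=0$, and then $\|\eta_{T^c}\|_1=\|\eta_T\|_1$ forces $\eta_T=0$; hence $\eta=0$ and $x^{\mathrm{LP}}=u$.

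The triangle-inequality bookkeeping is routine; the only delicate point is this equality analysis in the exact-recovery case, which is needed precisely because~\eqref{eq:diam} is assumed with a non-strict inequality and $\Card(u)$ is allowed to equal $S/4$ (if one assumes $\Card(u)<S/4$ the conclusion $\eta=0$ is immediate from a strict version of the chain). The other thing worth flagging is that the two thresholds $S/4$ and $S/16$ are not arbitrary: they are exactly the cardinalities at which $\sqrt{|T|/S}$ equals $\tfrac12$ and $\tfrac14$, and the value $\tfrac14$ is what makes the self-improving inequality $\|\eta\|_1\le\tfrac12\|\eta\|_1+2\sigma$ close with the stated constant $4$.
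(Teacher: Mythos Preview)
The paper does not supply its own proof of this proposition; it is quoted as \citet[Th.~2.1]{Kash07} and immediately followed by commentary. Your argument is correct and is essentially the standard null-space-property proof from that reference: set $\eta=x^{\mathrm{LP}}-u\in\ker A$, combine the cone inequality $\|\eta_{T^c}\|_1\le\|\eta_T\|_1+2\|u_{T^c}\|_1$ coming from $\ell_1$-optimality with the Cauchy--Schwarz/diameter bound $\|\eta_T\|_1\le\sqrt{|T|/S}\,\|\eta\|_1$, and specialise $|T|$ to $S/4$ or $S/16$. The equality analysis you give for the borderline case $\Card(u)=S/4$ (needed because \eqref{eq:diam} is stated with a non-strict inequality) is a nice touch that most presentations omit.
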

This means that the $\ell_1$-minimization problem in~\eqref{eq:l1-dec} will recover exactly all sparse signals $u$ satisfying $\Card(u)\leq S/4$ and that the $\ell_1$ reconstruction error for other signals will be at most four times larger than the $\ell_1$ error corresponding to the best possible approximation of $u$ by a signal of cardinality at most $S/16$.

\section{Weak recovery conditions}\label{s:weak}
Similar conditions (with slightly better recovery thresholds) can be derived when the signal $u$ follows a given distribution, and recovery is only required to occur with high probability. Given $k\in[0,n]$, suppose now that the signal is i.i.d., distributed as follows
\BEQ\label{eq:signal-dist}
u_i=
\left\{\BA{ll}
-1& \mbox{with probability }k/2n\\
+1& \mbox{with probability }k/2n\\
0&\mbox{otherwise,} \quad i=1,\ldots,n.
\EA\right.
\EEQ
The condition defining ${\mathcal U}$ in~\eqref{eq:u-set} can be written
\[
\max_{\substack{w\in\{-1,1\}^n\\Ax=0, \|x\|_1\leq 1}} \left\{u^T x + \xi \sum_{i=1}^n u_i w_i x_i \right\} \leq \xi
\]
and because the maximum is taken over a polyhedral set, this can be understood as
\[
\max_{x \in {\mathcal T}} u^T x \leq \xi
\]
where ${\mathcal T}\subset \reals^n$ is a finite set. When $u$ is distributed as in~\eqref{eq:signal-dist}, the left-hand side $\max_{x \in {\mathcal T}} u^T x$ of this last condition is a Rademacher process whose mean and fluctuations can be controlled, as detailed in the lemma below.

\begin{lemma}\label{lem:g-fluct}
Let $A\in\reals^{m \times n}$, with $\xi>0$ and $u$ distributed as in~\eqref{eq:signal-dist}, define
\BEQ\label{eq:nrm-ratio}
S(A)\equiv\max_{Ax=0} \frac{\|x\|_2}{\|x\|_1},
\EEQ
then the expected value of the max. can be bounded by
\[
M(A) \equiv \Expect\left[\displaystyle \max_{\substack{w\in\{-1,1\}^n\\Ax=0,\|x\|_1\leq 1}} \left\{u^Tx + \xi \sum_{i=1}^n u_i w_i x_i \right\}\right] \leq  (1+\xi) S(A) \Expect[\|u\|_2]
\]
and
\[
\Prob\left[\displaystyle \max_{\substack{w\in\{-1,1\}^n\\Ax=0, \|x\|_1\leq 1}} \left\{u^Tx + \xi \sum_{i=1}^n u_i w_i x_i \right\}\geq \xi\right] \leq 4 e^{-\frac{(\xi - M(A))^2}{4(1+\xi)^2 S^2(A)}}
\]
whenever $M(A)\leq \xi$.
\end{lemma}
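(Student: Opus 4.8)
The plan is to treat $f(u,w,x) = u^Tx + \xi\sum_i u_i w_i x_i$ as a Rademacher-type process indexed by the finite set over which we maximize, and to bound its expectation by a Cauchy--Schwarz / norm-ratio argument, then bound its fluctuations by a bounded-differences (McDiarmid) concentration inequality applied to the function $u \mapsto g(u) \equiv \max_{w,x} f(u,w,x)$.

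For the expectation bound, I would first fix any feasible $x$ (with $Ax=0$, $\|x\|_1\le 1$) and any $w\in\{-1,1\}^n$, and observe that $u^Tx + \xi\sum_i u_i w_i x_i \le (1+\xi)\sum_i |u_i|\,|x_i| = (1+\xi)\,|u|^T|x|$, where $|u|$ denotes the componentwise absolute value. Since $u_i\in\{-1,0,1\}$ we have $|u|^T|x| = u'^T|x|$ for $u' = |u|$, and Cauchy--Schwarz gives $|u|^T|x| \le \|u\|_2\,\|x\|_2 \le \|u\|_2\, S(A)\,\|x\|_1 \le S(A)\,\|u\|_2$, using the definition of $S(A)$ in~\eqref{eq:nrm-ratio} and $\|x\|_1\le1$. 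Taking the max over $w,x$ and then expectations over $u$ yields $M(A) \le (1+\xi)\,S(A)\,\Expect[\|u\|_2]$, which is the first claim.

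For the concentration bound, I would apply the bounded-differences inequality to $g(u) = \max_{w,x} f(u,w,x)$ viewed as a function of the independent coordinates $u_1,\dots,u_n$. The key step is to estimate, for each coordinate $i$, the oscillation $c_i = \sup |g(u) - g(u^{(i)})|$ when only $u_i$ is changed (over its possible values in $\{-1,0,1\}$). Since $g$ is a maximum of affine functions of $u$, changing $u_i$ by an amount $\Delta$ changes each $f(\,\cdot\,,w,x)$ by $\Delta(x_i + \xi w_i x_i)$, which is at most $2(1+\xi)|x_i|$ in absolute value; hence $c_i \le 2(1+\xi)\sup\{|x_i| : Ax=0, \|x\|_1\le 1\}$. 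The natural bound here is $|x_i|\le\|x\|_\infty\le\|x\|_2 \le S(A)\|x\|_1\le S(A)$, so $c_i \le 2(1+\xi)S(A)$, but this gives $\sum_i c_i^2 \le 4n(1+\xi)^2 S(A)^2$, which is too weak by a factor of $n$. The main obstacle is therefore to get the \emph{aggregate} control $\sum_i c_i^2 \le 4(1+\xi)^2 S(A)^2$ rather than the naive per-coordinate bound: one should instead bound $\sum_i c_i^2 \le 4(1+\xi)^2 \sup\{\sum_i x_i^2 : Ax=0,\|x\|_1\le1\} = 4(1+\xi)^2\sup\{\|x\|_2^2 : Ax=0,\|x\|_1\le1\} \le 4(1+\xi)^2 S(A)^2$, again using~\eqref{eq:nrm-ratio}. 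Getting this step right — i.e.\ recognizing that the sum of squared coordinatewise oscillations is itself controlled by the squared $\ell_2/\ell_1$ norm ratio — is the crux; the McDiarmid inequality then gives $\Prob[g(u) \ge \Expect[g(u)] + t] \le e^{-t^2/(2\sum_i c_i^2)} \le e^{-t^2/(8(1+\xi)^2 S(A)^2)}$.

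Finally I would set $t = \xi - M(A) \ge 0$ (valid by the hypothesis $M(A)\le\xi$), which gives $\Prob[g(u)\ge\xi] \le \Prob[g(u)\ge\Expect[g(u)]+t] \le e^{-(\xi-M(A))^2/(8(1+\xi)^2 S(A)^2)}$. To reach the stated constant $4e^{-(\xi-M(A))^2/(4(1+\xi)^2 S^2(A))}$ I would either account for a two-sided version of the inequality, or — more likely, given the factor $4$ and the denominator $4$ rather than $8$ — reduce the symmetrized process to a conditional Rademacher process given the support pattern of $u$, apply a Rademacher concentration bound (Talagrand / bounded-differences with the $\pm1$ signs as the random variables, whose oscillations are half as large), and absorb the union-bound or reflection factor into the constant $4$. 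I expect this last bookkeeping — matching the exact constants $4$ and $4$ — to be the only remaining fussy point once the aggregate oscillation bound is in hand.
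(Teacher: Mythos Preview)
Your expectation bound is fine and matches the paper's argument essentially line for line.

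The concentration argument, however, has a genuine gap at the step you flag as ``the crux.'' In McDiarmid's inequality the constants $c_i$ must be chosen deterministically, so the correct bound is $c_i \le 2(1+\xi)\sup_{x}|x_i|$ with the supremum over feasible $x$ taken \emph{separately for each} $i$. Hence
\[
\sum_i c_i^2 \;\le\; 4(1+\xi)^2 \sum_i \Bigl(\sup_x |x_i|\Bigr)^2,
\]
and this is \emph{not} the same as $4(1+\xi)^2\sup_x \sum_i x_i^2$; you have interchanged the sum and the supremum in the wrong direction. For instance, if $A=0$ the feasible set is the whole $\ell_1$ ball, each $\sup_x|x_i|=1$, and $\sum_i c_i^2 = 4(1+\xi)^2 n$, recovering only your ``too weak by a factor of $n$'' bound. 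So McDiarmid as stated cannot give the claimed exponent.

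What the paper does instead is observe that $g(u)=\sup_{t\in T} u^T t$, with $T=\{(\idm+\xi\diag(w))x:\,w\in\{-1,1\}^n,\,Ax=0,\,\|x\|_1\le1\}$, is convex and $\sigma$-Lipschitz in $\ell_2$, where
\[
\sigma \;=\; \sup_{t\in T}\|t\|_2 \;=\; \max_{w}\,\max_{Ax=0,\,\|x\|_1\le1}\|(\idm+\xi\diag(w))x\|_2 \;=\; (1+\xi)\,S(A).
\]
One then invokes Talagrand's convex-distance concentration for independent bounded variables (this is \citep[Cor.\,4.8]{Ledo05} in the paper), which yields $\Prob[g(u)\ge \Expect g(u)+t]\le 4\,e^{-t^2/(4\sigma^2)}$ directly, with the constants $4$ and $4$ you were trying to match. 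The key quantity is the $\ell_2$ radius of the index set $T$ --- a single supremum --- not a sum of coordinatewise oscillations.
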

\begin{proof}
First, remember that 
\[
\max_{\substack{Ax=0, \|x\|_1\leq 1\\ w\in\{-1,1\}^n}}~u^T(\idm+\xi \diag(w))x
\leq
(1+\xi)\max_{\substack{Ax=0\\ \|x\|_1\leq 1}}~|u|^T|x|
\]
where $|u|$ is the vector with components $|u_i|$ here. Then, a Cauchy-Schwarz inequality yields
\[
\Expect\left[\displaystyle\max_{\substack{Ax=0\\ \|x\|_1\leq 1}}~(1+\xi) |u|^T|x|\right]
\leq
\displaystyle (1+\xi)\max_{\substack{Ax=0\\ \|x\|_1\leq 1}}{\|x\|_2} ~\Expect\left[\|u\|_2\right].
\]
We then note that
\[
\max_{w\in\{-1,1\}^n}~\max_{\substack{Ax=0\\ \|x\|_1\leq 1}} \|(\idm+\xi \diag(w))x\|_2= (1+\xi)
\max_{\substack{Ax=0\\ \|x\|_1\leq 1}} \|x\|_2,
\]
when $\xi \in(0,1)$, and the concentration inequality follows from \citep[Cor.\,4.8]{Ledo05}.
\end{proof}

We summarize these last results in the following proposition, which highlights the role played by $S(A)$ in controlling the probability of recovering the signal $u$.
\begin{proposition}\label{prop:cond}
Suppose the signal $u$ is distributed as in~\eqref{eq:signal-dist}, $\beta>0$ and $A\in\reals^{m \times n}$ satisfies
\BEQ\label{eq:recov-cond}
S(A)<\frac{1}{\Expect[\|u\|_2]+2\beta+4\sqrt{\pi}}
\EEQ
then
\[
\Prob\left[u \notin {\mathcal U}\right] \leq 4 e^{-\beta^2}
\]
where ${\mathcal U}$ is the set of $\ell_1$-recoverable signals defined in~\eqref{eq:u-set}.
\end{proposition}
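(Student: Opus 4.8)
The plan is to combine the two bounds from Lemma~\ref{lem:g-fluct} with a concrete estimate of $\Expect[\|u\|_2]$ and then choose the free parameters so that the exponent in the concentration bound simplifies to $-\beta^2$. First I would invoke Lemma~\ref{lem:g-fluct}: writing $S=S(A)$ and $M=M(A)$, we have $M \leq (1+\xi)\,S\,\Expect[\|u\|_2]$, and whenever $M\leq \xi$ the probability that the recovery condition fails is at most $4\exp\bigl(-(\xi-M)^2/(4(1+\xi)^2 S^2)\bigr)$. Since the event $\{u\notin{\mathcal U}\}$ is exactly the event that the polyhedral maximum exceeds $\xi$ for the given $\xi$, controlling this probability is what we need. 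The natural choice is to let $\xi \uparrow 1$ (the set ${\mathcal U}$ is defined with ``for some $\xi\in(0,1)$'', so any admissible $\xi$ suffices), so that $1+\xi \to 2$; I would carry the argument with $\xi$ close to $1$ and absorb the slack into the constants, or simply work with the clean substitution $1+\xi = 2$ in the bound, noting the strict inequality in~\eqref{eq:recov-cond} leaves room.

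Next I would estimate $\Expect[\|u\|_2]$. Since $\|u\|_2 = \sqrt{\Card(u)}$ and $\Card(u)$ is a sum of $n$ i.i.d.\ Bernoulli$(k/n)$ variables with mean $k$, Jensen's inequality gives $\Expect[\|u\|_2] = \Expect[\sqrt{\Card(u)}] \leq \sqrt{\Expect[\Card(u)]} = \sqrt{k}$. (One could instead keep $\Expect[\|u\|_2]$ symbolic, since condition~\eqref{eq:recov-cond} is stated directly in terms of it, and I expect the paper does exactly that.) With $1+\xi \leq 2$, the bound on $M$ becomes $M \leq 2S\,\Expect[\|u\|_2]$. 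Now impose~\eqref{eq:recov-cond}: $S < 1/(\Expect[\|u\|_2] + 2\beta + 4\sqrt{\pi})$. Then $\xi - M \geq 1 - 2S\,\Expect[\|u\|_2] > 1 - 2\,\Expect[\|u\|_2]/(\Expect[\|u\|_2]+2\beta+4\sqrt\pi)$; I would show the right-hand side is at least $2S(2\beta + 4\sqrt{\pi})$, i.e.\ $\xi - M \geq 2S(2\beta+4\sqrt\pi) > 4\beta S$, which also confirms $M < \xi$ so the lemma applies. Dividing, $(\xi-M)/(2(1+\xi)S) \geq (\xi-M)/(4S) \geq \beta$ after dropping the $4\sqrt\pi$ term, hence $(\xi-M)^2/(4(1+\xi)^2S^2) \geq \beta^2$, and the concentration bound yields $\Prob[u\notin{\mathcal U}] \leq 4e^{-\beta^2}$.

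The slightly delicate bookkeeping — and the step I'd treat most carefully — is reconciling the ``for some $\xi\in(0,1)$'' quantifier in the definition of ${\mathcal U}$ with the need to plug a fixed $\xi$ into the strict tail bound: one must verify that taking $\xi$ sufficiently close to $1$ (so that $1+\xi$ is as close to $2$ as the strict inequality in~\eqref{eq:recov-cond} permits) keeps both $M(A)\leq\xi$ and the exponent $\geq \beta^2$. I would also double-check the role of the $4\sqrt{\pi}$ term: it is precisely what is needed to pass from $(\xi - M)/(4S) \geq \beta + 2\sqrt{\pi}\cdot(\text{something})$ down to $\geq \beta$, absorbing the $\sqrt{2}$-type constants coming from $(1+\xi)^2$ versus $2(1+\xi)$ and any slack in the $\xi\to 1$ limit; getting these constants to line up exactly is the only real obstacle, everything else is a direct substitution into Lemma~\ref{lem:g-fluct}.
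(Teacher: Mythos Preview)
Your approach is exactly the paper's: invoke Lemma~\ref{lem:g-fluct}, choose $\xi\in(0,1)$ so that $M(A)\le\xi$ and the exponent $(\xi-M(A))^2/(4(1+\xi)^2S(A)^2)$ is at least $\beta^2$, and read off the conclusion. The paper's proof is two lines: it simply asserts that under~\eqref{eq:recov-cond} ``there is a $\xi\in(0,1)$ such that $M(A)\le\xi$ and $(1+\xi)S(A)\bigl(\Expect[\|u\|_2]+2\beta+4\sqrt{\pi}\bigr)<\xi$'' and then cites the lemma; from that inequality one gets $\xi-M(A)\ge (1+\xi)S(A)(2\beta+4\sqrt\pi)$, hence the exponent exceeds $(\beta+2\sqrt\pi)^2>\beta^2$.

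Your explicit choice $\xi\uparrow 1$ exposes a bookkeeping issue you already suspected. Writing $D=\Expect[\|u\|_2]+2\beta+4\sqrt\pi$, the inequality $(1+\xi)S(A)\,D<\xi$ has a solution $\xi\in(0,1)$ iff $S(A)\,D<1/2$, whereas~\eqref{eq:recov-cond} only gives $S(A)\,D<1$. Concretely, your step ``$1-2S\Expect[\|u\|_2]\ge 2S(2\beta+4\sqrt\pi)$'' is equivalent to $2S\,D\le 1$, which is not implied by the stated hypothesis. So your instinct that ``getting these constants to line up exactly is the only real obstacle'' is right on the mark; the paper's terse existence claim glosses over this same factor-of-two slack rather than resolving it. Everything else in your outline---the role of $M(A)\le(1+\xi)S(A)\Expect[\|u\|_2]$, the reduction to bounding $(\xi-M)/(2(1+\xi)S)$, and the use of the $4\sqrt\pi$ cushion to drop down to $\beta$---matches the intended argument.
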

\begin{proof}
If $S(A)<{1}/{(\Expect[\|u\|_2]+2\beta+4\sqrt{\pi})}$, then there is a $\xi\in(0,1)$ such that $M(A)\leq \xi$ and
\[
(1+\xi)S(A)(\Expect[\|u\|_2]+2\beta+4\sqrt{\pi})<\xi,
\]
Lemma~\ref{lem:g-fluct} then yields the desired result.
\end{proof}

Because $\Expect[\|u\|_2] < \sqrt{k}$ (by Jensen's inequality), recovery with high probability can be obtained at slightly higher cardinalities $k$ than those required for recovery of all signals. Of course, this discrepancy vanishes if the random model for $u$ has uniformly distributed support of size exactly $k$. Here however, other choices of norm in~\eqref{eq:univ-cond} might produce different results.

\section{Tractable bounds}\label{s:tract}
In this section, we discuss methods to efficiently bound the ratio $S(A)$, i.e. control the Banach-Mazur distance of $\ell_1$ and $\ell_2$ on the nullspace of $A$. 

\subsection{Semidefinite relaxation} \label{ss:sdp-relax}
We now show how to compute tractable bounds on the ratio
\[
S(A)=\max_{Ax=0} \frac{\|x\|_2}{\|x\|_1},
\]
defined in~\eqref{eq:nrm-ratio}. This question is directly connected to the problem of efficiently testing Kashin decompositions (see \citep[\S4.1]{Szar10} for a discussion). We first formulate a semidefinite relaxation of this problem.
\begin{lemma}\label{eq:sdp-relax}
Let $A\in\reals^{m \times n}$,
\BEQ\label{eq:sdp}
S(A)^2\leq SDP(A) \equiv  \max_{\substack{\Tr(A^TAX)=0\\ \|X\|_1\leq 1,\, X\succeq 0}}~ \Tr X
\EEQ
where $SDP(A)$ is computed by solving a semidefinite program in the variable $X\in\symm_n$.
\end{lemma}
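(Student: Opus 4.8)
The plan is to run the standard rank-one lifting argument: show that every feasible vector of the $S(A)$ problem lifts to a feasible matrix of the semidefinite program in~\eqref{eq:sdp} with the same objective value, so that $S(A)^2\le SDP(A)$ follows immediately.

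First I would use the scale-invariance of the ratio $\|x\|_2/\|x\|_1$ to normalize: the maximum defining $S(A)$ is attained at some $x^\star$ with $Ax^\star=0$ and $\|x^\star\|_1=1$, so that $S(A)^2=\|x^\star\|_2^2$. Then I would take the lift $X^\star=x^\star(x^\star)^T\in\symm_n$ and check the three constraints of~\eqref{eq:sdp}. Positive semidefiniteness is automatic since $X^\star$ is rank one. The linear constraint holds because
\[
\Tr(A^TAX^\star)=(x^\star)^TA^TAx^\star=\|Ax^\star\|_2^2=0 .
\]
The crucial check is the normalization constraint: since $X^\star_{ij}=x^\star_i x^\star_j$, the entrywise $\ell_1$ norm factorizes,
\[
\|X^\star\|_1=\sum_{i,j=1}^n|x^\star_i x^\star_j|=\Big(\sum_{i=1}^n|x^\star_i|\Big)^2=\|x^\star\|_1^2=1 ,
\]
so $X^\star$ is feasible, and its objective value is $\Tr X^\star=\sum_i (x^\star_i)^2=\|x^\star\|_2^2=S(A)^2$, which gives $SDP(A)\ge S(A)^2$.

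I would close by remarking that~\eqref{eq:sdp} really is a semidefinite program: the objective $\Tr X$ and the equality $\Tr(A^TAX)=0$ are linear in $X$, the constraint $X\succeq0$ is a linear matrix inequality, and $\|X\|_1\le1$ can be modeled with an auxiliary variable $U\in\symm_n$ through the linear inequalities $-U_{ij}\le X_{ij}\le U_{ij}$ and $\sum_{i,j}U_{ij}\le1$.

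I do not expect a genuine obstacle here; the only point worth flagging is precisely that the entrywise $\ell_1$ norm factorizes exactly on a rank-one matrix, which is why $\|X\|_1\le1$ is the right relaxation of $\|x\|_1\le1$ — replacing it by, say, the nuclear or spectral norm of $X$ would break the matching rank-one lift and weaken (or invalidate) the bound.
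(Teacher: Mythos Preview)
Your proof is correct and follows exactly the same rank-one lifting argument as the paper: rewrite $S(A)^2$ as a maximization over rank-one matrices $X=xx^T$ satisfying $\Tr(A^TAX)=0$, $\|X\|_1\le 1$, $X\succeq 0$, and then drop the rank constraint. The paper states this in one line, while you spell out each feasibility check explicitly; there is no substantive difference.
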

\begin{proof}
Writing $X=xx^T$, we have
\[
S(A)^2= \max_{\substack{\Tr(A^TAX)=0,\, \|X\|_1^2\leq 1,\\\Rank(X)=1,\, X\succeq 0}}~ \Tr X
\]
and dropping the rank constraint yields the desired result.
\end{proof}

We now connect the value of $S(A)$ with that of the function $\alpha_1(A)$ defined in \citep{Judi08,dAsp08a} as
\BEQ\label{alpha1}
\alpha_1(A) \equiv \max_{Ax=0} \frac{\|x\|_\infty}{\|x\|_1},
\EEQ
which can be computed by solving either a linear program \citep{Judi08} or a semidefinite program \citep{dAsp08a}. The following lemma bounds $S(A)$ using $\alpha_1(A)$.

\begin{lemma}\label{eq:approx-sdp-alpha}
Let $A\in\reals^{m \times n}$, we have
\[
\alpha_1(A) \leq S(A) \leq \sqrt{SDP(A)} \leq \sqrt{\alpha_1(A)}
\]
\end{lemma}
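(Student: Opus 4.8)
The plan is to treat the four-term chain one link at a time; two of the three inequalities are essentially immediate and the only genuine work is in the last one.

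For the leftmost inequality I would just compare the definitions \eqref{alpha1} and \eqref{eq:nrm-ratio}: both $\alpha_1(A)$ and $S(A)$ are suprema over $x\in\ker A$ of a ratio whose denominator is $\|x\|_1$ and whose numerator is $\|x\|_\infty$, resp.\ $\|x\|_2$, and since $\|x\|_\infty\le\|x\|_2$ pointwise the supremum defining $\alpha_1(A)$ cannot exceed that defining $S(A)$. The middle inequality $S(A)\le\sqrt{SDP(A)}$ is just Lemma~\ref{eq:sdp-relax} restated, since that lemma already establishes $S(A)^2\le SDP(A)$ and $S(A)\ge 0$.

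So the substance is the rightmost inequality, equivalently $SDP(A)\le\alpha_1(A)$. I would take an arbitrary $X$ feasible for the semidefinite program in~\eqref{eq:sdp} and use the two constraints $X\succeq0$ and $\Tr(A^TAX)=0$ together: writing the spectral decomposition $X=\sum_k\lambda_k v_kv_k^T$ with $\lambda_k\ge0$ and $\{v_k\}$ orthonormal, cyclicity of the trace turns $\Tr(A^TAX)=0$ into $\sum_k\lambda_k\|Av_k\|_2^2=0$, so $v_k\in\ker A$ whenever $\lambda_k>0$; consequently every column $Xe_j$ of $X$ lies in $\ker A$. Applying the definition~\eqref{alpha1} of $\alpha_1(A)$ to the vector $Xe_j$ gives $X_{jj}=e_j^TXe_j\le\|Xe_j\|_\infty\le\alpha_1(A)\,\|Xe_j\|_1$, and summing over $j$ (using $X_{jj}\ge0$ on the left from $X\succeq0$, and $\sum_j\|Xe_j\|_1=\|X\|_1\le1$ on the right) yields $\Tr X\le\alpha_1(A)\|X\|_1\le\alpha_1(A)$. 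Taking the supremum over feasible $X$ and then square roots closes the chain.

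The only place I expect any subtlety is precisely this last step, and it is conceptual rather than computational: the crude bound $\Tr X\le\|X\|_1\le1$ is correct but discards the nullspace constraint entirely, so one must instead feed the nullspace property into the \emph{columns} of the positive semidefinite matrix $X$ (they inherit membership in $\ker A$ from $\Tr(A^TAX)=0$); after that observation everything is routine.
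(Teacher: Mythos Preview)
Your argument is correct and matches the paper's proof almost verbatim: the first two inequalities are handled identically, and for the third you (like the paper) deduce $AX=0$ from $X\succeq0$ and $\Tr(A^TAX)=0$, apply the definition of $\alpha_1(A)$ columnwise to get $X_{jj}\le\|Xe_j\|_\infty\le\alpha_1(A)\|Xe_j\|_1$, and sum. The only cosmetic difference is that you reach $AX=0$ via the spectral decomposition whereas the paper asserts it directly (it follows at once from $\Tr(A^TAX)=\|AX^{1/2}\|_F^2$).
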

\begin{proof}
The first inequality simply follows from $\|x\|_ \infty \leq \|x\|_2$, the second from Lemma~\ref{eq:sdp-relax}. If $X$ solves~\eqref{eq:sdp}, $\Tr(A^TAX)=0$ implies $AX=0$, which means that the columns of $X$ are in the nullspace of~$A$. By definition of $\alpha_1(A)$, we then have $X_{ii} = \|X_{i}\|_\infty \leq \alpha_1(A) \|X_i\|_1$, hence $\Tr(X) \leq \alpha_1(A) \|X\|_1\leq  \alpha_1(A)$, which yields the desired result.
\end{proof}

The following proposition shows that if a matrix allows recovery of all signals of cardinality less than $k^*$, then the SDP relaxation above will efficiently certify recovery of all signals up to cardinality $O({k^*/\sqrt{n}})$. This is a direct extension of Lemma~\ref{eq:approx-sdp-alpha} and Proposition~\ref{prop:card-S}.

\begin{proposition}\label{prop:perf}
Suppose $A\in\reals^{m \times n}$ satisfies condition~\eqref{eq:diam} for some $S>0$, the semidefinite relaxation will satisfy
\BEQ\label{eq:ratio-bnd-perf}
S(A) \leq \sqrt{SDP(A)} \leq S^{-\frac{1}{4}}
\EEQ
and the semidefinite relaxation will certify exact decoding of all signals of cardinality at most $\sqrt{S}$.
\end{proposition}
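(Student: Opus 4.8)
The plan is to chain together the two ingredients flagged right before the statement: Lemma~\ref{eq:approx-sdp-alpha}, which sandwiches $S(A)$ and $\sqrt{SDP(A)}$ between $\alpha_1(A)$ and $\sqrt{\alpha_1(A)}$, and Proposition~\ref{prop:card-S}, which turns a bound on $S(A)$ into an exact-recovery guarantee. First I would observe that the hypothesis~\eqref{eq:diam} says precisely $S(A)\le 1/\sqrt{S}$. Since $\alpha_1(A)\le S(A)$ (because $\|x\|_\infty\le\|x\|_2$), this immediately gives $\alpha_1(A)\le 1/\sqrt{S}$, i.e. $S^{1/2}\le 1/\alpha_1(A)$, equivalently $\alpha_1(A)\le S^{-1/2}$.

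Next I would plug this into the upper half of Lemma~\ref{eq:approx-sdp-alpha}, which states $\sqrt{SDP(A)}\le\sqrt{\alpha_1(A)}$. Combining, $\sqrt{SDP(A)}\le\sqrt{\alpha_1(A)}\le\sqrt{S^{-1/2}}=S^{-1/4}$. Together with $S(A)\le\sqrt{SDP(A)}$ from Lemma~\ref{eq:sdp-relax} (or directly from~\eqref{eq:diam}), this yields the claimed chain~\eqref{eq:ratio-bnd-perf}: $S(A)\le\sqrt{SDP(A)}\le S^{-1/4}$.

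For the recovery claim, I would note that $\sqrt{SDP(A)}\le S^{-1/4}$ means $SDP(A)\le S^{-1/2}=1/\sqrt{S}$, so the certified diameter bound is $S(A)\le\sqrt{SDP(A)}\le (\sqrt S)^{-1/2} = 1/S^{1/4}=1/\sqrt{S'}$ with $S'=\sqrt{S}$. Feeding this certified value $S'$ into Proposition~\ref{prop:card-S} (applied with $S$ replaced by $S'=\sqrt{S}$, which requires only $S'>0$, i.e. $S>0$), we get exact recovery $x^{\mathrm{LP}}=u$ whenever $\Card(u)\le S'/4=\sqrt{S}/4$. Since the relaxation has produced a genuine upper bound on $S(A)$, this recovery guarantee is certified by the SDP, which is the assertion. (Strictly the statement says "cardinality at most $\sqrt S$" whereas Proposition~\ref{prop:card-S} gives $\sqrt S/4$; I would either absorb the constant or read $\sqrt S$ up to the usual constant factor — no extra argument is needed beyond the two cited results.)

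The only step requiring any care is the book-keeping of which "$S$" plays which role: the original $S$ from~\eqref{eq:diam} is the true (possibly uncheckable) diameter parameter, while $\sqrt S$ is the weaker parameter the SDP is guaranteed to certify; the quartic-root loss is exactly the price of passing through $\alpha_1(A)$, and the $O(k^*/\sqrt n)$ in the surrounding discussion is the specialization $S\sim n$ (so $\sqrt S\sim\sqrt n$, $k^*\sim S\sim n$, giving $\sqrt S\sim k^*/\sqrt n$). There is no genuine obstacle here — the proposition is, as the text says, a direct corollary of Lemma~\ref{eq:approx-sdp-alpha} and Proposition~\ref{prop:card-S}.
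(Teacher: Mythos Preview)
Your proposal is correct and follows exactly the same route as the paper: use $\alpha_1(A)\le S(A)\le 1/\sqrt{S}$ together with $\sqrt{SDP(A)}\le\sqrt{\alpha_1(A)}$ from Lemma~\ref{eq:approx-sdp-alpha} to get $\sqrt{SDP(A)}\le S^{-1/4}$, then invoke Proposition~\ref{prop:card-S} with the certified parameter $\sqrt{S}$. Your observation about the missing factor~$4$ is fair; the paper suppresses this constant as well.
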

\begin{proof} From Lemma~\ref{eq:approx-sdp-alpha}, we know that $\alpha_1\leq S(A)$ hence $\sqrt{SDP(A)}\leq \sqrt{S(A)}$. We conclude using Proposition~\ref{prop:card-S}.
\end{proof}

We can produce a second proof of this last result, which uses the norm ratio in~\eqref{eq:nrm-ratio} directly.
\begin{proposition}\label{prop:perf2}
Suppose $A\in\reals^{m \times n}$ satisfies condition~\eqref{eq:diam} for some $S>0$, the semidefinite relaxation will satisfy
\BEQ\label{eq:ratio-bnd-perf2}
S(A) \leq \sqrt{SDP(A)} \leq S^{-\frac{1}{4}}
\EEQ
and the semidefinite relaxation will certify exact decoding of all signals of cardinality at most $\sqrt{S}$.
\end{proposition}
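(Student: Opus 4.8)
The idea is to bound $SDP(A)$ directly, working column by column in the SDP variable, instead of routing through $\alpha_1(A)$ as in Proposition~\ref{prop:perf}. Let $X\in\symm_n$ be any feasible point for the semidefinite program~\eqref{eq:sdp}, i.e.\ $X\succeq 0$, $\|X\|_1\leq 1$, and $\Tr(A^TAX)=0$. As already noted in the proof of Lemma~\ref{eq:approx-sdp-alpha}, positive semidefiniteness of $X$ turns $\Tr(A^TAX)=0$ into $AX=0$ (write $\Tr(A^TAX)=\Tr(X^{1/2}A^TAX^{1/2})$, the trace of a positive semidefinite matrix, so it vanishes iff $AX^{1/2}=0$). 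Hence every column $X_i$ of $X$ lies in the nullspace of $A$.

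Now I would apply the diameter bound~\eqref{eq:diam} to each such column: $\|X_i\|_2\leq \|X_i\|_1/\sqrt{S}$. Since $X\succeq 0$, its diagonal entries are nonnegative and each is dominated by the Euclidean norm of the corresponding column, $0\leq X_{ii}\leq \|X_i\|_2$. Combining, $\sqrt{S}\,X_{ii}\leq \|X_i\|_1$ for every $i$, and summing over $i$ gives $\sqrt{S}\,\Tr X\leq \sum_i\|X_i\|_1=\|X\|_1\leq 1$, where the middle equality is just the definition of the entrywise norm used in~\eqref{eq:sdp}. Therefore $\Tr X\leq S^{-1/2}$ for every feasible $X$, so $SDP(A)\leq S^{-1/2}$ and $\sqrt{SDP(A)}\leq S^{-1/4}$; together with $S(A)\leq\sqrt{SDP(A)}$ from Lemma~\ref{eq:sdp-relax} this establishes~\eqref{eq:ratio-bnd-perf2}. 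Since $S(A)\leq S^{-1/4}$ is precisely condition~\eqref{eq:diam} with $S$ replaced by $\sqrt S$, we conclude the decoding claim using Proposition~\ref{prop:card-S}, exactly as in the proof of Proposition~\ref{prop:perf}.

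I do not expect a real obstacle: the argument is essentially a one-line application of~\eqref{eq:diam} to the columns of the SDP variable. The one point that genuinely requires care is the passage from $\Tr(A^TAX)=0$ to $AX=0$, which relies on $X\succeq 0$ — without it a feasible $X$ need not have its columns in the nullspace of $A$ and the column-wise bound would break. I would also double-check that the constant linking the stated threshold $\sqrt S$ for the cardinality claim to the $S/4$ threshold in Proposition~\ref{prop:card-S} is used consistently with the first proof.
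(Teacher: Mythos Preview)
Your proof is correct and matches the paper's argument almost verbatim: both obtain $AX=0$ from $X\succeq 0$, apply~\eqref{eq:diam} to each column, and sum. The only cosmetic difference is that the paper routes the diagonal bound through $X_{ii}\leq\|X_i\|_\infty\leq\|X_i\|_2$ rather than $X_{ii}\leq\|X_i\|_2$ directly, which lets it remark afterward that $X\succeq 0$ is not actually used in the chain of inequalities (only in converting $\Tr(A^TAX)=0$ to $AX=0$), motivating the LP relaxation~\eqref{eq:lp-bound}.
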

\begin{proof} If $X$ solves the SDP relaxation in~\eqref{eq:sdp}, with $S(A)=S$ in~\eqref{eq:nrm-ratio}, then the rows of $X$ are in the nullspace of $A$, and satisfy $\|X_i\|_2\leq\|X_i|/\sqrt{S}$. Then, with $\|X\|_1$, 
\[
\Tr X \leq \sum_{i=1}^n \|X_i\|_\infty \leq \sum_{i=1}^n \|X_i\|_2 \leq \frac{\|X\|_1}{\sqrt{S}} \leq \frac{1}{\sqrt{S}}
\]
hence the desired result.
\end{proof}

Note that we are not directly using $X\succeq 0$ in this last proof, so the semidefinite relaxation can be replaced by a linear programming bound
\BEQ\label{eq:lp-bound}
\BA{rll}
LP(A)\equiv &\mbox{max.} & \Tr X\\
& \mbox{s.t.} & AX=0\\
& & \|X\|_1 \leq 1
\EA\EEQ
We now show that the $S^{-1/4}$ bound is typically the best we can hope for from the relaxation in~\eqref{eq:sdp}.
\begin{proposition}\label{prop:perf-limit}
Suppose $A\in\reals^{m \times n}$ with $n=2m$, then
\BEQ\label{eq:ratio-limit}
\frac{1}{\sqrt{2n}} \leq SDP(A)
\EEQ
and the semidefinite relaxation will certify exact decoding of all signals of cardinality at most $O(\sqrt{m})$.
\end{proposition}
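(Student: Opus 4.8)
The plan is to exhibit an explicit feasible point for the semidefinite program defining $SDP(A)$ in \eqref{eq:sdp} whose trace is at least $1/\sqrt{2n}$. Since $A$ is full rank with $n=2m$, its nullspace $V=\{x\in\reals^n:Ax=0\}$ has dimension $n-m=n/2$. Let $P\in\symm_n$ be the orthogonal projector onto $V$. Then $P\succeq 0$ and $AP=0$, so $\Tr(A^TAP)=\Tr\big(A^T(AP)\big)=0$, i.e. $P$ meets the equality constraint in \eqref{eq:sdp}; the only obstruction to feasibility is the normalization, so I would take $X=P/\|P\|_1$, which is feasible with $X\succeq 0$, $AX=0$ and $\|X\|_1=1$.

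The next step is to bound the two relevant quantities. First, $\Tr P=\dim V=n/2$. Second, because $P$ is an orthogonal projector we have $P^2=P$, hence $\|P\|_F^2=\Tr(P^2)=\Tr P=n/2$; applying Cauchy--Schwarz to the $n^2$ entries of $P$ gives $\|P\|_1=\sum_{i,j}|P_{ij}|\leq n\|P\|_F=n\sqrt{n/2}$. Therefore
\[
SDP(A)\;\geq\;\Tr X\;=\;\frac{\Tr P}{\|P\|_1}\;\geq\;\frac{n/2}{n\sqrt{n/2}}\;=\;\frac{1}{\sqrt{2n}},
\]
which is the claimed lower bound \eqref{eq:ratio-limit}.

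For the statement on certified cardinalities, recall that the relaxation is used by certifying $S(A)\leq\sqrt{SDP(A)}$ (Lemma~\ref{eq:sdp-relax}) and then invoking Proposition~\ref{prop:card-S} with $S=1/SDP(A)$, which guarantees $x^{\mathrm{LP}}=u$ whenever $\Card(u)\leq S/4=1/(4\,SDP(A))$. By the lower bound just established, $1/(4\,SDP(A))\leq \sqrt{2n}/4=O(\sqrt{m})$, so no matter how good $A$ is, the SDP-based certificate can never guarantee recovery beyond cardinality $O(\sqrt{m})$, confirming that the $S^{-1/4}$ scaling of Propositions~\ref{prop:perf}--\ref{prop:perf2} is essentially the best one can extract from \eqref{eq:sdp}.

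The one genuinely non-obvious point --- and where I expect the ``work'' to be --- is the choice of feasible point. A rank-one candidate $X=xx^T$ with $x\in V$ only achieves $\Tr X=\|x\|_2^2$ subject to $\|x\|_1\leq1$, i.e. at most $S(A)^2$, which for a Kashin-type matrix is of order $1/n$, far below $1/\sqrt{2n}$. So the higher-rank projector is essential: it is precisely the combination of idempotency $P^2=P$ with the crude bound $\|P\|_1\leq n\|P\|_F$ that makes the trace-to-$\ell_1$ ratio as large as $n^{-1/2}$. Everything else in the argument is bookkeeping, and the same feasible point works verbatim for the linear programming bound $LP(A)$ in \eqref{eq:lp-bound}, since $X\succeq0$ is not needed for the estimate.
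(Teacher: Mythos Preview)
Your argument is correct and matches the paper's proof essentially verbatim: the paper also takes the orthoprojector $Q$ onto the nullspace of $A$, uses $\Tr Q=m$, $\|Q\|_F=\sqrt{m}$ and $\|Q\|_1\leq n\|Q\|_F$ via Cauchy--Schwarz, and normalizes to obtain a feasible point with trace $1/\sqrt{2n}$. Your additional remarks on why a rank-one candidate would fail and on the $LP(A)$ bound are nice clarifications but go beyond what the paper records.
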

\begin{proof} Let $Q$ be the orthoprojector on the nullspace of $A$. We have $Q\succeq 0$, $\Tr(Q)=m$, $\|Q\|_F=\sqrt{m}$ and $\|Q\|_1\leq \sqrt{n^2} \|Q\|_F \leq  n \sqrt{m}$, which means that $X=Q/(n\sqrt{m})$ is a feasible point of the SDP relaxation in~\eqref{eq:sdp} with $\Tr X=\sqrt{m}/n=1/\sqrt{2n}$ which yields the required bound on the optimal value of~\eqref{eq:sdp}.
\end{proof}

This means that if the matrix $A$ allows exact recovery of signals with up to (an unknown number) $S$ nonzero coefficients, then our relaxation will only exact certify recovery of signals with cardinality $O(\sqrt{S})$. The fact that approximating the recovery threshold $S$ is hard is not entirely surprising, $S$ in \eqref{eq:diam} is the Euclidean diameter of the centrally symmetric polytope $\{x\in \reals^n:Ax=0,\,\|x\|_1\leq1\}$. Computing the radius of convex polytopes is NP-Complete \citep{Freu85,Lova92,Grit93,Brie01}. In particular, \cite{Lova92} show that if we only have access to an oracle for $K$, then there is no randomized polynomial time algorithm to compute the diameter of a convex body $K$ within a factor $n^{1/4}$. In that sense, the approximation ratio obtained above is optimal. Here of course, we have some additional structural information on the set $K$ (it is a section of the $\ell_1$ ball) so there is a possibility that this bound could be improved. On the other hand, in the next section, we will see that if we are willing to add a few random experiments to $A$, then the diameter {\em can} be bounded with high probability by a randomized polynomial time algorithm.

\section{Geometric bounds} \label{s:geom}
Proposition \ref{prop:card-S} establishes a link between the sparse recovery threshold $S$ of a matrix $A$ and the diameter of the polytope $\{x\in \reals^n:Ax=0,\,\|x\|_1\leq1\}$. In this section, we first recall some classical results of geometric functional analysis and use these to quantify the sparse recovery thresholds of arbitrary matrices~$A$.

\subsection{Dvoretzky's theorem}
We first recall some concentration results on the sphere as well as classical results in geometric functional analysis  which control, in particular, the diameter of {\em random} sections of the $\ell_1$ ball (i.e. where $A$ is chosen randomly). Let~$\sigma$ be the unique rotation invariant probability measure on the unit sphere $\sphere^{n-1}$ of $\reals^n$, and $\|\cdot\|_K$ be a norm on $\reals^n$ with unit ball $K$, then
\BEQ\label{eq:conc-sph}
\sigma\left\{ x\in \sphere^{n-1}:\, |\|x\|-M(K)|\geq t M(K)\right\} \leq e^{-k(K) t^2}
\EEQ
with
\BEQ\label{eq:dv-dim}
k(K)=c n \left(\frac{M(K)}{b(K)}\right)^2
\EEQ
where $c>0$ is a universal constant, and
\BEQ\label{eq:m-b}
M(K)=\int_{\sphere^{n-1}} \|x\|d\sigma(x) \quad \mbox{and} \quad b(K)=\sup_{x\in\sphere^{n-1}} \|x\|.
\EEQ
\citet{Klar07} call $k(K)$ the {\em Dvoretzky dimension} of the convex set $K$. Part of the proof of Dvoretzky's theorem states that random sections of $K$ with dimension $k=k(K)$ are approximately spherical with high probability (w.r.t. the uniform measure on the Grassman ${\mathcal G}_{n,k}$). We write $B_p^n$ the $\ell_p$ ball of $\reals^n$. 

\begin{theorem}[\bf General Dvoretzky]
Let $E\subset\reals^n$ be a subspace of dimension $l\leq k(K)$ defined in~\eqref{eq:dv-dim}, chosen uniformly at random w.r.t. to the Haar measure on ${\mathcal G}_{n,k}$, then 
\[
\frac{c_1}{M(K)}(B_2^n \cap E) \subset (K \cap E) \subset \frac{c_2}{M(K)}(B_2^n \cap E)
\]
with probability $1-e^{-c_3 l}$, where $c_1,c_2,c_3>0$ are absolute constants.
\end{theorem}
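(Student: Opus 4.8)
The plan is to reduce everything to the concentration inequality~\eqref{eq:conc-sph} on the sphere together with a standard net argument, following the classical route due to Milman. First I would fix a subspace $E$ of dimension $l\leq k(K)$ and set $r=M(K)$; the claim is that, with high probability over the choice of $E$ in ${\mathcal G}_{n,k}$, every unit vector $x\in E$ satisfies $c_1\leq \|x\|_K\leq c_2$, which is exactly the stated sandwiching $\frac{c_1}{r}(B_2^n\cap E)\subset K\cap E\subset\frac{c_2}{r}(B_2^n\cap E)$ after rescaling. For a single fixed $x\in\sphere^{n-1}$, inequality~\eqref{eq:conc-sph} gives $|\|x\|_K-r|\geq t r$ with probability at most $e^{-k(K)t^2}$; by rotation invariance this is the same as the probability that a fixed norm deviates on a random $x$, so it transfers to the random-subspace formulation.

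Next I would invoke a $\delta$-net $\mathcal N$ of the unit sphere $\sphere^{l-1}$ of $E$: there is such a net with $|\mathcal N|\leq (1+2/\delta)^l\leq e^{C l}$ for a suitable absolute constant $C$ once $\delta$ is fixed (say $\delta=1/4$). Taking a union bound over $\mathcal N$ of the bad event $\{|\|x\|_K-r|\geq (t/2) r\}$, the failure probability is at most $e^{Cl}e^{-k(K)t^2/4}$. Since $l\leq k(K)$, choosing $t$ to be a small absolute constant makes the exponent $-(k(K)t^2/4 - Cl)$ negative and proportional to $-l$, which gives the $1-e^{-c_3 l}$ bound. On the event that the estimate holds on the whole net, a routine approximation/successive-refinement argument (writing an arbitrary unit vector as a geometric series of net elements and using that $b(K)=\sup_{\sphere^{n-1}}\|\cdot\|_K$ controls the increments) upgrades control on $\mathcal N$ to control on all of $\sphere^{l-1}$, at the cost of only slightly worse absolute constants $c_1,c_2$.

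The main obstacle — and the only genuinely delicate point — is the net-refinement step, because a crude Lipschitz bound for $\|\cdot\|_K$ on the sphere involves $b(K)$, and $b(K)$ can be much larger than $M(K)=r$ (indeed for $K=B_1^n$ one has $b/M$ of order $\sqrt{n}$). This is precisely why the dimension is capped at $k(K)=cn(M(K)/b(K))^2$: that choice is exactly what is needed so that the $e^{-k(K)t^2}$ factor beats both the net cardinality $e^{Cl}$ and the accumulated error from the $b(K)$-sized increments. So the argument must be organized so that the refinement error is summed as a geometric series with ratio $\delta$, and $\delta$ (together with $t$) chosen small enough relative to the constant $c$ hidden in~\eqref{eq:dv-dim}; keeping careful track that all constants are absolute, and independent of $n$, $l$ and of the particular norm $\|\cdot\|_K$, is the part that requires care. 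Everything else is bookkeeping around~\eqref{eq:conc-sph}.
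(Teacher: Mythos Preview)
The paper does not supply its own proof here; it simply cites \citep[\S4]{Milm86} and \citep[Th.\,6.4]{Vers11}. Your sketch is precisely the classical Milman argument contained in those references --- concentration of $\|\cdot\|_K$ on the sphere, a $\delta$-net on $\sphere^{l-1}\subset E$, a union bound, and a geometric-series refinement from the net to the whole sphere --- so your approach matches what the paper points to.

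One small inaccuracy worth flagging: in the successive-approximation step you describe, $b(K)$ does not in fact control the increments. Once the good event holds on the net $\mathcal N$, every $y\in\mathcal N$ already satisfies $\|y\|_K\leq(1+t)M(K)$, so writing $x=\sum_i \alpha_i y_i$ with $y_i\in\mathcal N$ and $|\alpha_i|\leq\delta^i$ gives increments of size $(1+t)M(K)\,\delta^i$, not $b(K)\,\delta^i$; both the upper and lower bounds in the refinement therefore involve only $M(K)$ and absolute constants. The quantity $b(K)$ enters solely through the concentration inequality~\eqref{eq:conc-sph} (it is the Lipschitz constant of $\|\cdot\|_K$ on $\sphere^{n-1}$), which is where the scaling $k(K)\sim n(M/b)^2$ originates. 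With this clarified, the only genuine bookkeeping is choosing the constant $c$ in~\eqref{eq:dv-dim} small enough that an absolute $t\in(0,1)$ beats the net cardinality in the union bound; your description of where the obstacle lies is thus slightly misplaced, though the overall argument is sound.
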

\begin{proof}
See \citep[\S4]{Milm86} or \citep[Th.\,6.4]{Vers11} for example.
\end{proof}

This result means that random sections of convex bodies with dimension $k$ are approximately spherical with high probability. \citet{Milm97} show that the threshold $k(K)$ is sharp in the sense that random sections of dimension greater than $k(K)$ are typically not spherical. Because projections of sphere are spheres, there is thus a phase transition at $k(K)$: random sections of $K$ become increasingly spherical until they reach dimension $k(K)$ below which they are approximately spherical with high probability.

The diameter follows this phase transition as well, and the following result characterizes its behavior as the dimension of the subspace decreases (we write $K^*$ the polar of $K$).

\begin{theorem}[\bf Low $\mathbf{M^*}$ estimate] \label{eq:low-m*}
Let $E\subset\reals^n$ be a subspace of codimension $k$ chosen uniformly at random w.r.t. to the Haar measure on ${\mathcal G}_{n,n-k}$, then 
\[
\diam(K \cap E) \leq c\sqrt{\frac{n}{k}}M(K^*)
\]
with probability $1-e^{-k}$, where $c$ is an absolute constant.
\end{theorem}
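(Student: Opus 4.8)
The plan is to prove the Low $M^*$ estimate by the standard Gordon/Milman route: reduce the diameter of a random section $K \cap E$ to a statement about the Gaussian mean width of $K^*$ restricted to a random subspace, then apply Gaussian concentration. Concretely, writing $\diam(K\cap E) = 2\sup_{x\in K\cap E}\|x\|_2$, I would observe that controlling this supremum is dual to controlling how large $M(K^* \cap E^\perp$-complement$)$ can be, i.e. one wants to show that on a generic subspace $E$ of codimension $k$ the body $K$ does not stick out past radius $\sim \sqrt{n/k}\,M(K^*)$ in the Euclidean metric.

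First I would set up the Gaussian formulation. Replace the uniform (Haar) random subspace $E$ of codimension $k$ by the kernel of a $k\times n$ matrix $G$ with i.i.d. standard Gaussian entries; this is a standard device, since $\ker G$ is Haar-distributed on ${\mathcal G}_{n,n-k}$ and the events in question are rotation invariant. The key inequality is then a Gaussian min–max bound (Gordon's theorem, or the $\gamma_2$/Sudakov–Fernique comparison): for $x$ ranging over $K\cap \sphere^{n-1}$, the condition $Gx = 0$ combined with the fact that $\|Gx\|_2 \geq \sqrt{k} - w(K)$ uniformly with high probability — where $w(K) = \Expect\sup_{x\in K}\langle g, x\rangle$ is the Gaussian mean width — forces every $x\in K\cap E$ with $\|x\|_2$ too large to violate $Gx=0$. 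Quantitatively, if $\|x\|_2 = R$ then $x/R \in \tfrac{1}{R}K$ has Gaussian width $w(K)/R$, and one needs $\sqrt{k} > w(K)/R$, i.e. $R > w(K)/\sqrt{k}$; translating $w(K) = \sqrt{n}\,M(K^*)$ (up to the universal constant relating the spherical average $M(\cdot)$ to the Gaussian average, via $\Expect\|g\|_2 \asymp \sqrt{n}$ and the identity $M(K) = \Expect_{g}\|g\|_K / \Expect\|g\|_2$ applied to $K^*$) gives the claimed bound $\diam(K\cap E)\leq c\sqrt{n/k}\,M(K^*)$. The probability estimate $1 - e^{-k}$ comes from Gaussian concentration of $x\mapsto \inf_{x\in S}\|Gx\|_2$ around its mean, which is $1$-Lipschitz in $G$, so deviations of order $\sqrt{k}$ have probability $e^{-ck}$.

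The main obstacle is getting the comparison inequality to hold \emph{uniformly} over the noncompact constraint set and converting it cleanly into a diameter bound: one must handle the supremum over all of $K\cap E$ (not just a fixed vector) and show that Gordon's inequality survives this uniformity with only universal-constant loss. This is exactly where Gordon's escape-through-a-mesh / escape-through-the-codimension theorem does the work, and the delicate point is bookkeeping the constants so that the final $c$ is absolute and independent of $n, k, K$. A secondary technical nuisance is the normalization: being careful that $M(K^*)$ as defined in~\eqref{eq:m-b} (an average over $\sphere^{n-1}$) matches the Gaussian width up to the right scalar, since a misplaced $\sqrt{n}$ would corrupt the $\sqrt{n/k}$ factor. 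Since a complete treatment is standard, I would simply cite \citep[\S4]{Milm86} or \citep[Th.\,6.4]{Vers11} for the details, as in the preceding theorem.

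\begin{proof}
See \citep[\S5-7]{Milm86}, \citep{PTJ86}, or \citep[\S7]{Vers11}.
\end{proof}
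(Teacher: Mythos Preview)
Your proposal is correct and matches the paper's own treatment: the paper does not prove this theorem either, it simply writes ``See \citep{Pajo86} for example.'' Your Gordon/escape-through-a-mesh sketch is a standard and valid route to the same result, and your final one-line proof citing the literature is exactly in the spirit of what the paper does.
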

\begin{proof}
See \citep{Pajo86} for example.
\end{proof}

The value of $M(K^*)$ is known for many convex bodies, including $l_p$ balls. In particular, $(B_1^n)^*=B_\infty^n$ and $M(B_\infty^n)\sim\sqrt{\log n /n}$ asymptotically. This means that random sections of the $\ell_1$ ball with dimension $n-k$ have diameter bounded by
\[
\diam(B_1^n \cap E)\leq c \sqrt{\frac{\log n}{k}}
\]
with high probability, where $c$ is an absolute constant (a more precise analysis allows the $\log$ term to be replaced by $log(n/k)$). 

\begin{theorem}[\bf Low $\mathbf{M}$ estimate] \label{eq:low-m}
Let $\lambda\in(0,1)$ and $k=\lfloor \lambda n \rfloor$ and $E\subset\reals^n$ be a subspace of codimension $k$ chosen uniformly at random w.r.t. to the Haar measure on ${\mathcal G}_{n,n-k}$, suppose $B^n_2 \subset K$ and
\[
M(K) \geq \sqrt{\lambda}
\]
then 
\[
\diam(K \cap E) \leq \frac{c \sqrt{1-\lambda}}{M(K)-\sqrt{\lambda}}
\]
with probability $1-c_2e^{-c_3\delta^2(1-\lambda)n}$, where 
\[
\delta=\frac{M^2(K)-\lambda}{1-M^2(K)}
\]
and $c_1,c_2,c_3$ are absolute constants.
\end{theorem}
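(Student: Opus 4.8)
The plan is to reduce the estimate to a lower bound on the minimum of $\|\cdot\|_K$ over the unit sphere of $E$, and then to obtain that bound from Gordon's Gaussian min-max (``escape through the mesh'') theorem followed by a Gaussian concentration step. Since $K$ is symmetric we have $\diam(K\cap E)=2/m_E$ with $m_E\equiv\inf_{y\in E\cap \sphere^{n-1}}\|y\|_K$, so it suffices to show
\[
m_E\;\geq\;\frac{c'\,\bigl(M(K)-\sqrt{\lambda}\bigr)}{\sqrt{1-\lambda}}
\]
with the announced probability, for a suitable absolute constant $c'$. The hypothesis $B_2^n\subset K$ is used here only through the inequality $\|\cdot\|_K\leq\|\cdot\|_2$: it makes $x\mapsto\|x\|_K$ $1$-Lipschitz on $\reals^n$, forces $b(K)=\sup_{\sphere^{n-1}}\|\cdot\|_K\leq1$, and --- together with $M(K)\le1$ --- gives the variance bound $\int_{\sphere^{n-1}}\|x\|_K^2\,d\sigma-M(K)^2\le1-M^2(K)$, which is what eventually produces the factor $1-M^2(K)$ in $\delta$.

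First I would realize the random subspace as $E=\ker G$ with $G\in\reals^{k\times n}$ having i.i.d.\ $N(0,1)$ entries, so that $E$ is Haar-distributed on $\mathcal{G}_{n,n-k}$. For $s>0$ set $T_s=\{y\in\sphere^{n-1}:\|y\|_K\le s\}$; if $T_s\cap\ker G=\emptyset$ then $m_E>s$. By Gordon's theorem, in the quantitative (concentrated) form underlying the low $M^*$-estimate of Theorem~\ref{eq:low-m*}, one has $\min_{y\in T_s}\|Gy\|_2\geq a_k-w(T_s)-t$ with probability at least $1-e^{-t^2/2}$, where $a_k=\mathbb{E}\|g\|_2$ (of order $\sqrt{k}$) for $g\sim N(0,I_k)$, and $w(T_s)=\mathbb{E}\sup_{y\in T_s}\langle h,y\rangle$ is the Gaussian width. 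Consequently, as soon as $w(T_s)<a_k$ the subspace $E$ misses $T_s$ and $m_E\ge s$; taking $t=a_k-w(T_s)$ feeds the gap directly into the probability estimate.

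The heart of the argument is then a sharp bound on $w(T_s)$, equivalently on the statistical dimension of the cone $\{x:\|x\|_K\le s\|x\|_2\}$. I would derive it from the spherical concentration inequality~\eqref{eq:conc-sph}: since $b(K)\le1$, for $s<M(K)$ the sublevel set obeys $\sigma(T_s)\le e^{-c\,n(M(K)-s)^2}$, so $T_s$ is a spherically small, width-deficient subset; converting small spherical (equivalently Gaussian-cone) measure into small Gaussian width then yields a bound of the shape $w(T_s)\le\sqrt{n}\,\bigl(1-c''(M(K)-s)\bigr)_{+}^{1/2}$. Imposing $w(T_s)<a_k$ (of order $\sqrt{\lambda n}$) forces $s$ to lie below a threshold of order $(M(K)-\sqrt{\lambda})/\sqrt{1-\lambda}$; inserting $t=a_k-w(T_s)$ into the concentration bound produces an exponent of the form $c_3\,\delta^2(1-\lambda)n$ with $\delta=(M^2(K)-\lambda)/(1-M^2(K))$, the factor $(1-\lambda)n$ recording $\dim E$.

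The step I expect to be the main obstacle is precisely this calibration of constants: arranging that $\sqrt{\lambda}$ appears in the denominator --- it enters naturally through the escape form, where the Gaussian budget is $a_k\sim\sqrt{k/n}\cdot\sqrt n=\sqrt{\lambda n}$ --- rather than the weaker $\sqrt{1-\lambda}$ that a pessimistic application would give (e.g.\ dividing $\inf_u\|Gu\|_K$ by $\sup_u\|Gu\|_2$, or using the crude bound that $K^*$, being contained in $B_2^n$, has Euclidean radius $\le1$), and that $1-M^2(K)$ reaches $\delta$ through the variance estimate rather than through the Lipschitz constant; this needs the \emph{sharp} statistical-dimension estimate for the sublevel cone, not just Gordon plus a union bound. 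A shorter alternative, consistent with how Theorem~\ref{eq:low-m*} and the General Dvoretzky theorem are handled above, is simply to cite the low $M$-estimate --- the natural companion of the low $M^*$-estimate, of which the displayed inequality is a quantitative restatement (see \citep{Milm86,Vers11} and the references therein).
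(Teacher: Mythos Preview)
The paper's own ``proof'' is exactly the shortcut you anticipate in your last sentence: it reads in full ``See \citep[Th.~B]{Gian05}.'' So there is nothing to compare at the level of detail you have written; your final alternative \emph{is} the paper's approach.

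On your sketch itself: the reduction $\diam(K\cap E)=2/m_E$ and the idea of showing that a random codimension-$k$ subspace misses the sublevel set $T_s=\{y\in\sphere^{n-1}:\|y\|_K\le s\}$ are both sound. The step that is not justified, and which you correctly flag as the obstacle, is the conversion ``small spherical measure of $T_s$ $\Rightarrow$ small Gaussian width of $T_s$'' with the precise shape $w(T_s)\le\sqrt{n}\,(1-c''(M(K)-s))_+^{1/2}$. Small measure alone does not control width (a union of many tiny, well-separated caps has negligible measure but width essentially $\sqrt{n}$), and the extra structure of $T_s$ as a sublevel set of a $1$-Lipschitz function does not obviously rescue this without further work. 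This is why the Gordon/escape route is the natural tool for the low $M^*$ estimate (where the relevant set is $K^*\cap\sphere^{n-1}$ and its width is $M(K^*)\sqrt{n}$ on the nose), but is not the standard route for the low $M$ estimate.

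The argument in \citet{Gian05} proceeds differently: it works directly with concentration of $\|\cdot\|_K$ on the sphere (your inequality~\eqref{eq:conc-sph}, with $b(K)\le1$ giving Dvoretzky dimension $k(K)\ge cnM(K)^2$), applies it pointwise to a net of the random section $E\cap\sphere^{n-1}$, and balances the net cardinality $(C/\varepsilon)^{n-k}$ against the concentration probability $e^{-cnM(K)^2t^2}$; the $1$-Lipschitz bound then passes from the net to the whole sphere of $E$. This is where the parameters $\sqrt{\lambda}$ and $1-M^2(K)$ enter with the stated calibration, without any width computation for $T_s$. Your route could perhaps be completed, but it would require an independent sharp bound on $w(T_s)$ that is essentially as hard as the theorem itself.
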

\begin{proof}
See \citep[Th.B]{Gian05}.
\end{proof}

Note that the condition $B^n_2 \subset K$ means the set $K$ needs to be normalized by $b(K)$. \citet{Klar04} recently produced a similar result using $M(K)$ together with volume ratios. This result applies to all values of $M(K)/b(K)$, unfortunately, the dependence on $k$ is exponential instead of being polynomial.

\subsection{Connection with sparse recovery}
We have seen in Proposition \ref{prop:card-S} that the sparse recovery threshold associated with the $m$ linear observations stored in $A\in\reals^{m \times n}$, i.e. the largest signal cardinality for which all signals $u$ can be recovered exactly by solving the $\ell_1$-minimization problem in~\eqref{eq:l1-dec}, is given by the radius (or diameter) of the centrally symmetric convex polytope $\{x\in \reals^n:Ax=0,\,\|x\|_1\leq1\}$. By homogeneity, this is equivalent to producing lower bounds on $\|Fy\|_1$ over $\sphere^{n-m-1}$, the unit sphere of $\reals^{n-m}$. 

Proposition \ref{prop:card-S} (or \cite{Kash07}) shows that the sparse recovery threshold $S$ of the observations $A\in\reals^{m\times n}$ satisfies
\BEQ\label{eq:S-diam}
S\geq \frac{1}{\diam(\{x\in \reals^n:Ax=0,\,\|x\|_1\leq1\})^2} 
\EEQ
The low $M^*$ estimate in Proposition~\ref{eq:low-m*} together with the fact that $M(B_\infty^n)\sim\sqrt{\log n /n}$ then shows that choosing $m$ linear samples $A\in\reals^{m\times n}$ uniformly at random in the Grassman will allow us, with high probability, to recover all signals with at most $\frac{m}{c\log n}$ nonzero coefficients, by solving the $\ell_1$ minimization problem in~\eqref{eq:l1-dec} (again, the log term can be replaced by $\log(n/k)$).

\subsection{Approximating the diameter}
As we have seen above, finding good compressed sensing experiments means finding matrices $A\in\reals^{m \times n}$ for which $\|Fy\|_1$ is almost spherical, where $F$ is any basis for the nullspace. Bad matrices are matrices for which the norm ball of $\|Fy\|_1$ is much closer to a cross-polytope. This section is thus focused on measuring how spherical $\|Fy\|_1$ actually is. The key difficulty in high dimensions is that all centrally symmetric convex bodies look like spheres, except for a few ``spikes'' (or tentacles in \citet{Vers11}) with negligible volume, hence precisely characterizing the diameter using only probabilistic arguments is delicate.

If we notice that $\|Fy\|_1$ defines a norm on $\reals^{n-m}$, we can try to apply Dvoretzky's result in the normed space $(\reals^{n-m},\|Fy\|_1)$ instead of $(\reals^{n},\|x\|_1)$. The Dvoretzky dimension $k(K)$ would then act as an indirect measure of how Euclidean $\|Fy\|_1$ is. In compressed sensing terms, $k(K)$ computed in $(\reals^{n-m},\|Fy\|_1)$ will measure how many random experiments need to be added to the matrix $A$ so that all signals of size $O^*(n)$ can be recovered exactly by solving the $\ell_1$-minimization problem in~\eqref{eq:l1-dec}. The low $M$ estimate makes this statement even more explicit: Theorem~\ref{eq:low-m} directly links the ratio $M(K)/b(K)$ and the number $(1-\lambda)n$ of random experiments that need to be added to reach recovery threshold $S$ (through the diameter).

\subsubsection{Approximating the Dvoretzky dimension} 
We will see below that the quantities $M(K)$ and $b(K)$ which characterize the phase transition for sections of the norm ball of $\|Fy\|_1$ can be approximated efficiently. We first recall a result which can be traced back at least to \citep{Nest98a,Stei05}, approximating the mixed $\|\cdot\|_{2 \rightarrow 1}$ operator norm by a MAXCUT type relaxation.

\begin{proposition}\label{prop:12-norm}
Let $F\in\reals^{n \times n-m}$, then
\BEQ\label{eq:l12-approx}
\frac{2}{\pi} SDP(F) \leq \max_{\|x\|_2\leq 1} \|Fx\|_1^2 \leq SDP(F)
\EEQ
where
\BEQ\label{eq:sdp-bk}
\BA{rll}
SDP(F)=& \mbox{max.} & \Tr(XFF^T)\\
& \mbox{s.t.} & \diag(X)=\ones\\
&& X \succeq 0.
\EA
\EEQ
\end{proposition}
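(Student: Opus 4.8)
The plan is to identify $SDP(F)$ as the standard MAXCUT-type semidefinite relaxation of the $\ell_2\to\ell_1$ operator norm and to establish the two inequalities separately: the right-hand one by a plain convex-relaxation argument, the left-hand one by Gaussian hyperplane rounding in the spirit of Goemans--Williamson and Nesterov.

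First I would rewrite the quantity on the left combinatorially. Since $\|Fx\|_1 = \max_{w\in\{-1,1\}^n} w^TFx$ and, for fixed $w$, $\max_{\|x\|_2\le1} w^TFx = \|F^Tw\|_2$, interchanging the two maxima gives
\[
\max_{\|x\|_2\le1}\|Fx\|_1^2 \;=\; \max_{w\in\{-1,1\}^n}\|F^Tw\|_2^2 \;=\; \max_{w\in\{-1,1\}^n}\Tr\!\big(ww^TFF^T\big).
\]
Every sign vector $w\in\{-1,1\}^n$ produces a matrix $ww^T$ that is feasible for the SDP in~\eqref{eq:sdp-bk} (it is positive semidefinite with unit diagonal), so replacing $ww^T$ by an arbitrary feasible $X$ only increases the objective; this gives the upper bound $\max_{\|x\|_2\le1}\|Fx\|_1^2\le SDP(F)$ at once.

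For the lower bound I would round an optimal solution. Let $X^\star$ be optimal for~\eqref{eq:sdp-bk} and factor $X^\star=V^TV$ with columns $v_1,\dots,v_n$; since $\diag(X^\star)=\ones$ each $v_i$ has unit norm. Draw a standard Gaussian $g\in\reals^n$ and set $w_i=\mathrm{sign}(v_i^Tg)$. The classical identity $\Expect[w_iw_j]=\tfrac{2}{\pi}\arcsin(X^\star_{ij})$ then gives
\[
\Expect\big[\Tr(ww^TFF^T)\big] \;=\; \tfrac{2}{\pi}\,\Tr\!\big(FF^T\arcsin(X^\star)\big),
\]
with $\arcsin$ acting entrywise. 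The one genuinely nonroutine step is the inequality $\Tr(FF^T\arcsin(X^\star))\ge\Tr(FF^TX^\star)$: because $\arcsin$ is odd with a power series having nonnegative coefficients, the Schur product theorem shows $\arcsin(X^\star)-X^\star$ is a nonnegative combination of entrywise powers of the positive semidefinite matrix $X^\star$ and is therefore itself positive semidefinite, while the trace inner product of the two positive semidefinite matrices $FF^T$ and $\arcsin(X^\star)-X^\star$ is nonnegative. Combining, $\Expect[\Tr(ww^TFF^T)]\ge\tfrac{2}{\pi}\Tr(FF^TX^\star)=\tfrac{2}{\pi}SDP(F)$, so some realization of $w$ attains at least this value, which together with the display above yields $\max_{\|x\|_2\le1}\|Fx\|_1^2\ge\tfrac{2}{\pi}SDP(F)$.

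The main obstacle is thus the arcsine/Schur-positivity lemma controlling the rounding loss; everything else (the dual-norm rewriting, feasibility of $ww^T$, the expectation formula for $w_iw_j$) is bookkeeping. Since the bound is classical one could instead simply cite \citep{Nest98a,Stei05}; in the self-contained version the only points worth double-checking are the interchange of the two maxima, the fact that squaring commutes with the maximum over the $\ell_2$ ball, and that the Gram factorization of $X^\star$ indeed has unit-norm columns.
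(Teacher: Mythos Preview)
Your proof is correct and follows essentially the same route as the paper: both rewrite $\max_{\|x\|_2\le1}\|Fx\|_1^2$ as the binary quadratic program $\max_{w\in\{-1,1\}^n} w^TFF^Tw$ (the paper via convexity of $u^TFF^Tu$ over the cube, you via the dual-norm identity and exchange of maxima) and then invoke Nesterov's $2/\pi$ bound for positive semidefinite forms. The only difference is that the paper cites \citet{Nest98b} for that last step, whereas you spell out the Gaussian-rounding/arcsine argument.
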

\begin{proof}
We can write
\[
\max_{\|x\|_2\leq 1} \|Fx\|_1^2 = \max_{\|u\|_\infty\leq 1} u^TFF^Tu
\]
and by convexity of $u^TFF^Tu$ this is equal to
\[
\max_{u\in\{-1,1\}^n} u^TFF^Tu
\]
and \citet{Nest98b} (using again the fact that $FF^T$ is positive semidefinite) shows that this problem can be approximated within a factor $2/\pi$ by the semidefinite relaxation in~\eqref{eq:sdp-bk}.
\end{proof}

This means that the mixed norm $b(K)$, which is typically hard to bound in probabilistic arguments, is approximated within a factor $2/\pi$ by solving a MAXCUT semidefinite relaxation when the norm ball is a section of the $\ell_1$ ball. We now recall a classical result showing that the spherical average $M(K)$ can be approximated by a Gaussian average.

\begin{lemma}\label{lem:m-gauss}
Let $f$ be a homogeneous function on $\reals^n$, then
\[
\int_{\sphere^{n-1}} f(x)d\sigma(x)=\left(\frac{1}{\sqrt{n}} +\frac{1}{4n^{3/2}}+o(n^{-3/2})\right)\Expect[f(g)]
\]
where $\sigma$ is the Haar measure on the sphere and $g\sim{\mathcal N}(0,\idm_n)$.
\end{lemma}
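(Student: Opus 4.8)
The plan is to reduce the spherical integral to a Gaussian expectation via polar coordinates, and then to identify the proportionality constant as the reciprocal of $\Expect[\|g\|_2]$, whose asymptotics are classical. (Throughout I read the hypothesis as: $f$ homogeneous of degree one, which is the case relevant here since $\|\cdot\|_K$ is degree-one homogeneous; one also assumes $f$ regular enough, e.g.\ continuous, that all the expectations below are finite.)

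First I would invoke the rotation invariance of the standard Gaussian. Writing $g=r\theta$ with $r=\|g\|_2$ and $\theta=g/\|g\|_2$, the variables $r$ and $\theta$ are independent, $\theta$ is distributed according to the uniform probability measure $\sigma$ on $\sphere^{n-1}$, and $r$ follows a $\chi_n$ distribution. Since $f$ is homogeneous of degree one, $f(g)=r\,f(\theta)$, so by independence
\[
\Expect[f(g)]=\Expect[r]\,\Expect[f(\theta)]=\Expect[\|g\|_2]\int_{\sphere^{n-1}} f(x)\,d\sigma(x),
\]
which gives the exact identity $\int_{\sphere^{n-1}} f(x)\,d\sigma(x)=\Expect[f(g)]/\Expect[\|g\|_2]$. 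Note that the $f$-dependence has now disappeared from the constant, so what remains is a purely dimensional asymptotic.

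Next I would expand $1/\Expect[\|g\|_2]$. The mean of a $\chi_n$ variable is $\Expect[\|g\|_2]=\sqrt{2}\,\Gamma\!\big((n+1)/2\big)/\Gamma(n/2)$, and the standard asymptotics for ratios of Gamma functions (equivalently, Stirling applied termwise) yield $\Gamma\!\big((n+1)/2\big)/\Gamma(n/2)=\sqrt{n/2}\,\big(1-\tfrac{1}{4n}+o(n^{-1})\big)$, hence $\Expect[\|g\|_2]=\sqrt{n}\,\big(1-\tfrac{1}{4n}+o(n^{-1})\big)$. Inverting this expansion,
\[
\frac{1}{\Expect[\|g\|_2]}=\frac{1}{\sqrt{n}}\Big(1+\frac{1}{4n}+o(n^{-1})\Big)=\frac{1}{\sqrt{n}}+\frac{1}{4n^{3/2}}+o(n^{-3/2}),
\]
which is the stated constant.

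The argument is essentially bookkeeping; the one place requiring care is the Gamma-ratio expansion, where Stirling must be carried to enough terms to pin down the $1/(4n)$ correction (keeping one further term shows the remainder is in fact $O(n^{-5/2})$, a fortiori $o(n^{-3/2})$). The only other point worth a line is the regularity assumption on $f$, which ensures $\Expect[f(g)]$ and $\int_{\sphere^{n-1}}f\,d\sigma$ are well defined and that the polar factorization of the expectation is legitimate; this is automatic for the norm-type (indeed piecewise-linear) functions $f$ to which the lemma is applied.
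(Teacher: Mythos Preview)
Your argument is correct and is essentially the same as the paper's: both use the rotation invariance of the Gaussian to factor $\Expect[f(g)]=\Expect[\|g\|_2]\int_{\sphere^{n-1}} f\,d\sigma$ via homogeneity, then insert the Gamma-ratio asymptotic $\Expect[\|g\|_2]=\sqrt{2}\,\Gamma((n+1)/2)/\Gamma(n/2)=\sqrt{n}-\tfrac{1}{4\sqrt{n}}+o(n^{-1/2})$ and invert. Your write-up is in fact a bit cleaner, since the paper's displayed formula for $\lambda_n$ is missing a reciprocal (it should read $\lambda_n^{-1}=\int_{\reals^n}\|x\|_2\,d\gamma(x)$), which you handle correctly.
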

\begin{proof}
Because the Gaussian measure $\gamma$ is invariant by rotation, uniqueness of the Haar measure on $\sphere^{n-1}$ means that
\[
\int_{\sphere^{n-1}} f(x)d\sigma(x)=\lambda_n\int_{\reals^{n}} \|x\|_2 f(x/\|x\|_2)d\gamma(x)=\lambda_n\int_{\reals^{n}}  f(x)d\gamma(x)
\]
for some constant $\lambda_n$ satisfying
\[
\lambda_n=\int_{\reals^{n}} \|x\|_2 d\gamma(x)
\]
and we conclude using 
\[
\int_{\reals^{n}} \|x\|_2 d\gamma(x) = \frac{\sqrt{2}\Gamma((n+1)/2)}{\Gamma(n/2)}=\sqrt{n} -\frac{1}{4\sqrt{n}}+o(n^{-1/2})
\]
as $n$ goes to infinity.
\end{proof}

We can now easily compute $M(K)$, when $K$ is the unit ball of $\|Fy\|_1$, with
\BEQ\label{eq:m-l1}
M(K)=\left(\frac{1}{\sqrt{n}} +\frac{1}{4n^{3/2}}+o(n^{-3/2})\right)\sqrt{\frac{2}{\pi}}\sum_{i=1}^n\|F_i\|_2
\EEQ
where $F_i$ are the rows of the matrix $F$, with $F\in\reals^{n\times n-m}$ satisfying $AF=0$. The key difficulty with these approximations of the Dvoretzky dimension is that $M(B_1^n)$ is roughly equal to $\sqrt{2n/\pi}$, so the ratio $M(K)/b(K)$ is already constant and the $2/\pi$ approximation ratio for $b(K)$ only produces trivial bounds. Hence, even though we can expect matrices with high approximate ratio $M(K)/SDP(F)$ to be good sensing matrices, there are no guarantees that all such matrices will have high approximate ratios. 


\subsubsection{Approximating $M^*(K)$} 
We can also use the low $M^*$ bound in Theorem~\ref{eq:low-m*} to produce bounds on the diameter. Once again, the idea here is to apply this bound in the normed space $(\reals^{n-m},\|Fy\|_1)$ instead of $(\reals^{n},\|x\|_1)$, i.e. measure how many random experiments need to be added to the matrix $A$ so that all signals of size $S$ can be recovered exactly by solving the $\ell_1$-minimization problem in~\eqref{eq:l1-dec}. Solving for the dual norm is a convex problem, hence we can simply approximate $M^*$ by simulation. In the particular case of $(\reals^{n-m},\|Fy\|_1)$, this means computing
\BEQ\label{eq:l1-m*}
\Expect\left[\max_{\|Fy\|_1\leq 1} y^Tg \right] = \Expect\left[\min_{F^Tx=g} \|x\|_\infty \right] =\Expect\left[\min_{F^Tx=0} \|Fg+x\|_\infty \right]
\EEQ
by duality, where $g\sim{\mathcal N}(0,\idm_{n-m})$ (and assuming $F^TF=\idm_{n-m}$). Sampling both terms simply means solving one linear program per sample. Also, a simple Cauchy inequality shows that $M(K^*)$ is bounded above by $O(1/\sqrt{S})$. Since the target precision for our estimate of $M(K^*)$ is always larger than $1/\sqrt{n}$, this produces a recipe for a randomized polynomial time algorithm for estimating~$S$. In fact, following \citep{Bour88,Gian97,Gian05}, if $K\subset\reals^n$ is a symmetric convex body, $0<\delta,\beta<1$ and we pick $N$ points $x_i$ uniformly at random on the sphere $\sphere^{n-1}$ with
\[
N=\frac{c \log(2/\beta)}{\delta^2}+1
\]
where $c$ is an absolute constant, then
\[
\left|M(K^*)-\frac{1}{N}\sum_{i=1}^N \|x_i\|_{K^*}\right| \leq \delta M(K^*)
\]
with probability $1-\beta$.

\small{\bibliographystyle{plainnat}\bibsep 1ex
\bibliography{MainPerso}}

\begin{thebibliography}{28}
\providecommand{\natexlab}[1]{#1}
\providecommand{\url}[1]{\texttt{#1}}
\expandafter\ifx\csname urlstyle\endcsname\relax
  \providecommand{\doi}[1]{doi: #1}\else
  \providecommand{\doi}{doi: \begingroup \urlstyle{rm}\Url}\fi

\bibitem[Bourgain et~al.(1988)Bourgain, Lindenstrauss, and Milman]{Bour88}
J.~Bourgain, J.~Lindenstrauss, and V.~Milman.
\newblock {Minkowski sums and symmetrizations}.
\newblock \emph{Geometric aspects of functional analysis}, pages 44--66, 1988.

\bibitem[Brieden et~al.(2001)Brieden, Gritzmann, Kannan, Klee, Lov{\'a}sz, and
  Simonovits]{Brie01}
A.~Brieden, P.~Gritzmann, R.~Kannan, V.~Klee, L.~Lov{\'a}sz, and M.~Simonovits.
\newblock {Deterministic and randomized polynomial-time approximation of
  radii}.
\newblock \emph{Mathematika}, 48\penalty0 (1-2):\penalty0 63--105, 2001.

\bibitem[Cand\`es and Tao(2005)]{Cand05}
E.~J. Cand\`es and T.~Tao.
\newblock Decoding by linear programming.
\newblock \emph{IEEE Transactions on Information Theory}, 51\penalty0
  (12):\penalty0 4203--4215, 2005.

\bibitem[Cohen et~al.(2009)Cohen, Dahmen, and DeVore]{Cohe06}
A.~Cohen, W.~Dahmen, and R.~DeVore.
\newblock {Compressed sensing and best k-term approximation}.
\newblock \emph{Journal of the {AMS}}, 22\penalty0 (1):\penalty0 211--231,
  2009.

\bibitem[d'Aspremont and El~Ghaoui(2008)]{dAsp08a}
A.~d'Aspremont and L.~El~Ghaoui.
\newblock Testing the nullspace property using semidefinite programming.
\newblock \emph{To appear in Mathematical Programming}, 2008.

\bibitem[Donoho and Tanner(2005)]{Dono05}
D.~L. Donoho and J.~Tanner.
\newblock Sparse nonnegative solutions of underdetermined linear equations by
  linear programming.
\newblock \emph{Proc. of the National Academy of Sciences}, 102\penalty0
  (27):\penalty0 9446--9451, 2005.

\bibitem[Donoho(2006)]{Dono06a}
D.L. Donoho.
\newblock {Compressed sensing}.
\newblock \emph{IEEE Transactions on Information Theory}, 52\penalty0
  (4):\penalty0 1289--1306, 2006.

\bibitem[Freund and Orlin(1985)]{Freu85}
R.M. Freund and J.B. Orlin.
\newblock {On the complexity of four polyhedral set containment problems}.
\newblock \emph{Mathematical Programming}, 33\penalty0 (2):\penalty0 139--145,
  1985.

\bibitem[Garnaev and Gluskin(1984)]{Garn84}
A.Y. Garnaev and E.D. Gluskin.
\newblock {On widths of the Euclidean ball}.
\newblock In \emph{Soviet Mathematics--Doklady}, volume~30, pages 200--203,
  1984.

\bibitem[Giannopoulos et~al.(2005)Giannopoulos, Milman, and Tsolomitis]{Gian05}
A.~Giannopoulos, V.D. Milman, and A.~Tsolomitis.
\newblock {Asymptotic formulas for the diameter of sections of symmetric convex
  bodies}.
\newblock \emph{Journal of Functional Analysis}, 223\penalty0 (1):\penalty0
  86--108, 2005.

\bibitem[Giannopoulos and Milman(1997)]{Gian97}
A.~A. Giannopoulos and V.~D. Milman.
\newblock On the diameter of proportional sections of a symmetric convex body.
\newblock \emph{International Math. Research Notices, No. 1 (1997) 5--19.},
  \penalty0 (1):\penalty0 5--19, 1997.

\bibitem[Gritzmann and Klee(1993)]{Grit93}
P.~Gritzmann and V.~Klee.
\newblock Computational complexity of inner and outer j-radii of polytopes in
  finite-dimensional normed spaces.
\newblock \emph{Mathematical programming}, 59\penalty0 (1):\penalty0 163--213,
  1993.

\bibitem[Juditsky and Nemirovski(2008)]{Judi08}
A.~Juditsky and A.S. Nemirovski.
\newblock On verifiable sufficient conditions for sparse signal recovery via
  $\ell_1$ minimization.
\newblock \emph{ArXiv:0809.2650}, 2008.

\bibitem[Kashin(1977)]{Kash77}
B.~Kashin.
\newblock {The widths of certain finite dimensional sets and classes of smooth
  functions}.
\newblock \emph{Izv. Akad. Nauk SSSR Ser. Mat}, 41\penalty0 (2):\penalty0
  334--351, 1977.

\bibitem[Kashin and Temlyakov(2007)]{Kash07}
B.S. Kashin and V.N. Temlyakov.
\newblock {A remark on compressed sensing}.
\newblock \emph{Mathematical notes}, 82\penalty0 (5):\penalty0 748--755, 2007.

\bibitem[Klartag and Vershynin(2007)]{Klar07}
B.~Klartag and R.~Vershynin.
\newblock Small ball probability and {D}voretzky's theorem.
\newblock \emph{Israel Journal of Mathematics}, 157\penalty0 (1):\penalty0
  193--207, 2007.

\bibitem[Klartag(2004)]{Klar04}
Bo'az Klartag.
\newblock A geometric inequality and a low {M}-estimate.
\newblock \emph{Proceedings of the American Mathematical Society}, 132\penalty0
  (9):\penalty0 2619--2628, 2004.

\bibitem[Ledoux(2005)]{Ledo05}
M.~Ledoux.
\newblock \emph{{The Concentration of Measure Phenomenon}}.
\newblock American Mathematical Society, 2005.

\bibitem[Lovasz and Simonovits(1992)]{Lova92}
L.~Lovasz and M.~Simonovits.
\newblock {On the randomized complexity of volume and diameter}.
\newblock In \emph{Foundations of Computer Science, 1992. Proceedings., 33rd
  Annual Symposium on}, pages 482--492. IEEE, 1992.

\bibitem[Milman and Schechtman(1986)]{Milm86}
V.D. Milman and G.~Schechtman.
\newblock \emph{Asymptotic theory of finite dimensional normed spaces}, volume
  1200 of \emph{Lecture notes in mathematics}.
\newblock Springer Verlag, 1986.

\bibitem[Milman and Schechtman(1997)]{Milm97}
VD~Milman and G.~Schechtman.
\newblock Global vs. local asymptotic theories of finite dimensional normed
  spaces.
\newblock \emph{Duke Math. J}, 90:\penalty0 73--93, 1997.

\bibitem[Nesterov(1998{\natexlab{a}})]{Nest98a}
Y.~Nesterov.
\newblock \emph{Global quadratic optimization via conic relaxation}.
\newblock Number 9860. CORE Discussion Paper, 1998{\natexlab{a}}.

\bibitem[Nesterov(1998{\natexlab{b}})]{Nest98b}
Y.~Nesterov.
\newblock {Semidefinite relaxation and nonconvex quadratic optimization}.
\newblock \emph{Optimization methods and software}, 9\penalty0 (1):\penalty0
  141--160, 1998{\natexlab{b}}.

\bibitem[Pajor and Tomczak-Jaegermann(1986)]{Pajo86}
A.~Pajor and N.~Tomczak-Jaegermann.
\newblock Subspaces of small codimension of finite-dimensional banach spaces.
\newblock \emph{Proceedings of the American Mathematical Society}, 97\penalty0
  (4):\penalty0 637--642, 1986.

\bibitem[Steinberg and Nemirovski(2005)]{Stei05}
D.~Steinberg and A.S. Nemirovski.
\newblock \emph{{Computation of matrix norms with applications to Robust
  Optimization}}.
\newblock PhD thesis, Technion, 2005.

\bibitem[Szarek(2010)]{Szar10}
S.~Szarek.
\newblock {Convexity, complexity, and high dimensions}.
\newblock In \emph{International Congress of Mathematicians}, volume~2, pages
  1599--1621, 2010.

\bibitem[Vershynin(2006)]{Vers06}
R.~Vershynin.
\newblock Isoperimetry of waists and local versus global asymptotic convex
  geometries.
\newblock \emph{Duke Mathematical Journal}, 131\penalty0 (1):\penalty0 1--16,
  2006.

\bibitem[Vershynin(2011)]{Vers11}
R.~Vershynin.
\newblock \emph{Lectures in Geometric Functional Analysis}.
\newblock In preparation, 2011.
\newblock URL
  \url{http://www-personal.umich.edu/~romanv/papers/GFA-book/GFA-book.pdf}.

\end{thebibliography}
\end{document}